\documentclass[12pt,reqno]{smfart}
\usepackage{amsmath,amssymb,smfthm,stmaryrd,epic,enumerate}
\usepackage[frenchb]{babel}
\usepackage[latin1]{inputenc}
\usepackage{a4wide}

%\textwidth=15cm \textheight=23.5cm

%\textwidth=5.6in \textheight=8.2in
%\addtolength{\textheight}{10mm} \addtolength{\textwidth}{5mm}
%\addtolength{\oddsidemargin}{-10mm} \addtolength{\topmargin}{-10mm}

\usepackage{pstricks}

\usepackage{xypic}
\usepackage[active]{srcltx}
\usepackage{graphicx}

\usepackage{lmodern}

\usepackage{hyperref}

\NumberTheoremsIn{subsection}\SwapTheoremNumbers

\begin{document}
\newtheorem{prop-defi}[smfthm]{Proposition-DÈfinition}
\newtheorem{notas}[smfthm]{Notations}
\newtheorem{nota}[smfthm]{Notation}
\newtheorem{defis}[smfthm]{DÈfinitions}
\newtheorem{hypo}[smfthm]{HypothËse}
\newtheorem*{theo*}{ThÈorËme}
\newtheorem*{hyp*}{HypothËses}

\def\Tm{{\mathbb T}}
\def\Um{{\mathbb U}}
\def\Am{{\mathbb A}}
\def\Fm{{\mathbb F}}
\def\Mm{{\mathbb M}}
\def\Nm{{\mathbb N}}
\def\Pm{{\mathbb P}}
\def\Qm{{\mathbb Q}}
\def\Zm{{\mathbb Z}}
\def\Dm{{\mathbb D}}
\def\Cm{{\mathbb C}}
\def\Rm{{\mathbb R}}
\def\Gm{{\mathbb G}}
\def\Lm{{\mathbb L}}
\def\Km{{\mathbb K}}
\def\Om{{\mathbb O}}
\def\Em{{\mathbb E}}

\def\BC{{\mathcal B}}
\def\QC{{\mathcal Q}}
\def\TC{{\mathcal T}}
\def\ZC{{\mathcal Z}}
\def\AC{{\mathcal A}}
\def\CC{{\mathcal C}}
\def\DC{{\mathcal D}}
\def\EC{{\mathcal E}}
\def\FC{{\mathcal F}}
\def\GC{{\mathcal G}}
\def\HC{{\mathcal H}}
\def\IC{{\mathcal I}}
\def\JC{{\mathcal J}}
\def\KC{{\mathcal K}}
\def\LC{{\mathcal L}}
\def\MC{{\mathcal M}}
\def\NC{{\mathcal N}}
\def\OC{{\mathcal O}}
\def\PC{{\mathcal P}}
\def\UC{{\mathcal U}}
\def\VC{{\mathcal V}}
\def\XC{{\mathcal X}}
\def\SC{{\mathcal S}}
\def\RC{{\mathcal R}}

\def\BF{{\mathfrak B}}
\def\AF{{\mathfrak A}}
\def\GF{{\mathfrak G}}
\def\EF{{\mathfrak E}}
\def\CF{{\mathfrak C}}
\def\DF{{\mathfrak D}}
\def\JF{{\mathfrak J}}
\def\LF{{\mathfrak L}}
\def\MF{{\mathfrak M}}
\def\NF{{\mathfrak N}}
\def\XF{{\mathfrak X}}
\def\UF{{\mathfrak U}}
\def\KF{{\mathfrak K}}
\def\FF{{\mathfrak F}}

\def \longmapright#1{\smash{\mathop{\longrightarrow}\limits^{#1}}}
\def \mapright#1{\smash{\mathop{\rightarrow}\limits^{#1}}}
\def \lexp#1#2{\kern \scriptspace \vphantom{#2}^{#1}\kern-\scriptspace#2}
\def \linf#1#2{\kern \scriptspace \vphantom{#2}_{#1}\kern-\scriptspace#2}
\def \linexp#1#2#3 {\kern \scriptspace{#3}_{#1}^{#2} \kern-\scriptspace #3}

\def \Ext{\mathop{\mathrm{Ext}}\nolimits}
\def \ad{\mathop{\mathrm{ad}}\nolimits}
\def \sh{\mathop{\mathrm{Sh}}\nolimits}
\def \irr{\mathop{\mathrm{Irr}}\nolimits}
\def \FH{\mathop{\mathrm{FH}}\nolimits}
\def \FPH{\mathop{\mathrm{FPH}}\nolimits}
\def \cofil{\mathop{\mathrm{coFil}}\nolimits}
\def \res{\mathop{\mathrm{res}}\nolimits}
\def \op{\mathop{\mathrm{op}}\nolimits}
\def \rec {\mathop{\mathrm{rec}}\nolimits}
\def \art{\mathop{\mathrm{Art}}\nolimits}
\def \hyp {\mathop{\mathrm{Hyp}}\nolimits}
\def \cusp {\mathop{\mathrm{Cusp}}\nolimits}
\def \scusp {\mathop{\mathrm{Scusp}}\nolimits}
\def \Iw {\mathop{\mathrm{Iw}}\nolimits}
\def \JL {\mathop{\mathrm{JL}}\nolimits}
\def \speh {\mathop{\mathrm{Speh}}\nolimits}
\def \isom {\mathop{\mathrm{Isom}}\nolimits}
\def \Vect {\mathop{\mathrm{Vect}}\nolimits}
\def \groth {\mathop{\mathrm{Groth}}\nolimits}
\def \hom {\mathop{\mathrm{Hom}}\nolimits}
\def \deg {\mathop{\mathrm{deg}}\nolimits}
\def \val {\mathop{\mathrm{val}}\nolimits}
\def \det {\mathop{\mathrm{det}}\nolimits}
\def \rep {\mathop{\mathrm{Rep}}\nolimits}
\def \spec {\mathop{\mathrm{Spec}}\nolimits}
\def \fr {\mathop{\mathrm{Fr}}\nolimits}
\def \frob {\mathop{\mathrm{Frob}}\nolimits}
\def \ker {\mathop{\mathrm{Ker}}\nolimits}
\def \im {\mathop{\mathrm{Im}}\nolimits}
\def \Red {\mathop{\mathrm{Red}}\nolimits}
\def \red {\mathop{\mathrm{red}}\nolimits}
\def \aut {\mathop{\mathrm{Aut}}\nolimits}
\def \diag {\mathop{\mathrm{diag}}\nolimits}
\def \spf {\mathop{\mathrm{Spf}}\nolimits}
\def \Def {\mathop{\mathrm{Def}}\nolimits}
\def \twist {\mathop{\mathrm{Twist}}\nolimits}
\def \supp {\mathop{\mathrm{Supp}}\nolimits}
\def \Id {{\mathop{\mathrm{Id}}\nolimits}}
\def \lie {{\mathop{\mathrm{Lie}}\nolimits}}
\def \Ind{\mathop{\mathrm{Ind}}\nolimits}
\def \ind {\mathop{\mathrm{ind}}\nolimits}
\def \soc {\mathop{\mathrm{Soc}}\nolimits}
\def \top {\mathop{\mathrm{Top}}\nolimits}
\def \ker {\mathop{\mathrm{Ker}}\nolimits}
\def \coker {\mathop{\mathrm{Coker}}\nolimits}
\def \gal {{\mathop{\mathrm{Gal}}\nolimits}}
\def \Nr {{\mathop{\mathrm{Nr}}\nolimits}}
\def \rn {{\mathop{\mathrm{rn}}\nolimits}}
\def \tr {{\mathop{\mathrm{Tr~}}\nolimits}}
\def \Sp {{\mathop{\mathrm{Sp}}\nolimits}}
\def \st {{\mathop{\mathrm{St}}\nolimits}}
\def \sp{{\mathop{\mathrm{Sp}}\nolimits}}
\def \perv{\mathop{\mathrm{Perv}}\nolimits}
\def \tor {{\mathop{\mathrm{Tor}}\nolimits}}
\def \nrd {{\mathop{\mathrm{Nrd}}\nolimits}}
\def \nilp {{\mathop{\mathrm{Nilp}}\nolimits}}
\def \obj {{\mathop{\mathrm{Obj}}\nolimits}}
\def \spl {{\mathop{\mathrm{Spl}}\nolimits}}
\def \gr {{\mathop{\mathrm{gr}}\nolimits}}

\def \rem{{\noindent\textit{Remarque.~}}}
\def \rems{{\noindent\textit{Remarques:~}}}
\def \ext {{\mathop{\mathrm{Ext}}\nolimits}}
\def \End {{\mathop{\mathrm{End}}\nolimits}}

\def\semi{\mathrel{>\!\!\!\triangleleft}}
\let \DS=\displaystyle

\def\HT{{\mathop{\mathcal{HT}}\nolimits}}

\setcounter{secnumdepth}{3} \setcounter{tocdepth}{3}

\def \Fil{\mathop{\mathrm{Fil}}\nolimits}
\def \CoFil{\mathop{\mathrm{CoFil}}\nolimits}
\def \Fill{\mathop{\mathrm{Fill}}\nolimits}
\def \CoFill{\mathop{\mathrm{CoFill}}\nolimits}
\def\SF{{\mathfrak S}}
\def\PF{{\mathfrak P}}
\def \EFil{\mathop{\mathrm{EFil}}\nolimits}
\def \EFill{\mathop{\mathrm{EFill}}\nolimits}
\def \FP{\mathop{\mathrm{FP}}\nolimits}

\let \longto=\longrightarrow
\let \oo=\infty

\let \d=\delta
\let \k=\kappa

\def \hi{\HC}

%\newcounter{num}
%\renewcommand{\thenum}{\arabic{num}}
\newcommand{\marque}{\addtocounter{smfthm}{1}
{\smallskip \noindent \textit{\thesmfthm}~---~}}

\renewcommand\atop[2]{\ensuremath{\genfrac..{0pt}{1}{#1}{#2}}}

\title{Principe de Mazur en dimension supÈrieure}

\alttitle{Mazur's principle in higher dimension}

\author{Boyer Pascal}
\email{boyer@math.univ-paris13.fr}
\address{UniversitÈ Paris 13, Sorbonne Paris CitÈ \\
LAGA, CNRS, UMR 7539\\ 
F-93430, Villetaneuse (France) \\
PerCoLaTor: ANR-14-CE25}
%
%\author{Deng Taiwang}
%\email{deng@math.univ-paris13.fr}
%\address{UniversitÈ Paris 13, Sorbonne Paris CitÈ \\
%LAGA, CNRS, UMR 7539\\ 
%F-93430, Villetaneuse (France) \\
%PerCoLaTor: ANR-14-CE25}

\thanks{L'auteur remercie l'ANR pour son soutien dans le cadre du projet PerCoLaTor 14-CE25.}

\frontmatter

\begin{abstract}
Le principe de Mazur pour $GL_2$ fournit des conditions simples pour qu'une $\overline \Fm_l$-reprÈsentation
irrÈductible non ramifiÈe provenant d'une forme modulaire de niveau $\Gamma_0(Np)$ provienne aussi
d'une forme de niveau $\Gamma_0(N)$. L'objectif de ce travail est de proposer une gÈnÈralisation de
ce principe en dimension supÈrieure pour certaines formes intÈrieures Ètendues non quasi-dÈployÈes
d'un groupe unitaire en Ètudiant la torsion dans la cohomologie 
des variÈtÈs de Shimura dites de Kottwitz-Harris-Taylor en lien avec la dÈgÈnÈrescence de la monodromie locale.

\end{abstract}

\begin{altabstract}
The Mazur principle for $GL_2$ gives simple conditions for an irreducible unramified $\overline \Fm_l$-representation
coming from a modular form of level $\Gamma_0(Np)$ to come for some modular form of level 
$\Gamma_0(N)$. The aim of this work is to give a generalization of this principle in higher dimension for
some particular extended inner forms non quasi split of a unitary group
studying the torsion cohomology classes of Shimura varieties of Kottwitz-Harris-Taylor type within its link with
the local monodromy degeneracy.

\end{altabstract}

%% Classification mathÈmatique  (2010)
%\subjclass{11G18, 11G10, 14G35, 14G22, 11F70, 11F80}
\subjclass{11F70, 11F80, 11F85, 11G18, 20C08}

\keywords{VariÈtÈs de Shimura, cohomologie de torsion, idÈal maximal de l'algËbre de Hecke, 
localisation de la cohomologie, reprÈsentation galoisienne}
%% Mots et expressions clÈs en anglais :

\altkeywords{Shimura varieties, torsion in the cohomology, maximal ideal of the Hecke algebra,
localized cohomology, galois representation}

%
%{\Large Cover Letter}
%
%\vspace{1cm}
%
%a) We decided to introduce Definition 1.1.3 and call \emph{Iwahori gÈnÈralisÈe} the parahoric considered here. We don't know if there is
%a better terminology.
%
%b) p4 l2: we have modified the definition of the Hecke operator $T_{v,i}$, and top of p5 the multiset of Satake parameters.
%We think it's better like that.
%
%c) p6 l3: as asked by the referee, we explain the condition of having non trivial invariants under our parahoric group in 
%galoisian terms.
%
%d) We have eliminated all (we hope so) the typos $n$ which should be $d$ (as noted by the referee).
%
%\newpage
%

\maketitle

\pagestyle{headings} \pagenumbering{arabic}

\tableofcontents
%
%\mainmatter
%
%\renewcommand{\theequation}{\arabic{section}.\arabic{subsection}.\arabic{smfthm}}

\section*{Introduction}

Dans la thÈorie classique des formes modulaires, une question importante est la dÈtermination
du niveau optimal ‡ partir duquel
une reprÈsentation galoisienne modulo $l$ est modulaire: que l'on pense par exemple ‡ son
application ‡ la preuve du grand thÈorËme de Fermat. Les conjectures de Serre, dÈsormais prouvÈes par
Khare et Wintenberger dans \cite{KW}, fournissent un cadre prÈcis pour cette question dans le cas de 
$GL_2$. Avant que ne soient Ètablies les conjectures de Serre, le principe de Mazur, rappelÈ ci-aprËs, 
constituait le rÈsultat le plus ÈvoluÈ sur ce thËme et, par exemple, l'ingrÈdient principal dans la preuve du 
thÈorËme de Ribet.

\begin{theo*}\textbf{(Principe de Mazur cf. \cite{ribet} thÈorËme 6.1)} \\
Soient $N$ un entier, $p$ un nombre premier ne divisant pas $N$ et $\bar \rho: \gal(\overline \Qm_l/\Qm)
\longrightarrow GL_2(\overline \Fm_l)$ une reprÈsentation galoisienne provenant d'une forme modulaire
de niveau $\Gamma_0(Np)$. On suppose que
\begin{itemize}
\item $p \neq l$,

\item $\bar \rho$ est irrÈductible et non ramifiÈe en $p$ et

\item $l$ ne divise pas $p-1$.
\end{itemize}
Alors $\bar \rho$ provient d'une forme modulaire de niveau $\Gamma_0(N)$.
\end{theo*}

L'objectif de ce travail est de proposer une version du principe de Mazur pour les reprÈsentations
automorphes de $GL_d$
autoduales pour un corps CM, en utilisant la cohomologie des variÈtÈs de
Shimura. Il est bien connu qu'au del‡ du cas $d=2$, il n'y a pas de variÈtÈ de Shimura pour $GL_d$ et la solution usuelle consiste
‡ remplacer $GL_d$ par un groupe de similitudes $G/\Qm$ qui, localement pour \og la moitiÈ \fg{} des premiers $p$, ressemble ‡
$GL_d(\Qm_p)$, au sens plus prÈcis o˘, cf. (\ref{eq-facteur-v}), 
$G(\Qm_p) \simeq \Qm_p^\times \times GL_d(F_v) \times \cdots$, o˘ $F$ est un corps CM
et o˘ $v|p$ sera une place de $F$ jouant le rÙle du premier $p$ dans le principe de Mazur. 
Pour que la situation gÈomÈtrique soit la plus simple possible et qu'on dispose donc d'un meilleur contrÙle de la cohomologie,
on choisit le groupe $G$ de faÁon ‡ nous retrouver dans la situation ÈtudiÈe par Harris et Taylor dans \cite{h-t}, 
i.e. en signatures $(1,d-1)\times (0,d) \times \cdots \times (0,d)$.
La formulation prÈcise 
du principe de Mazur dans cette  situation est donnÈe au thÈorËme \ref{theo-principal}, donnons simplement dans cette introduction
une idÈe de ce que devient l'hypothËse clef \og $\bar \rho$ non ramifiÈe en $p$ \fg{} dans notre situation.

Une reprÈsentation automorphe $\Pi$ 
de $G$ fournit des paramËtres de Satake en ses places de non ramification et donc
un idÈal premier $\widetilde{\mathfrak m}$ d'une algËbre de Hecke \og anÈmique \fg, 
cf. la dÈfinition \ref{nota-spl2}: on note aussi $\mathfrak m$ l'idÈal maximal associÈ ‡ la rÈduction modulo $l$ de ces paramËtres de
Satake. D'aprËs \cite{h-t}, on associe ‡
$\widetilde{\mathfrak m}$ une reprÈsentation $ \rho_{ \widetilde{\mathfrak m} }$ de $\gal(\bar F/F)$ et $\bar \rho_{\mathfrak m}$ sa rÈduction
modulo $l$. Comme dans le cas de $GL_2$
\begin{itemize}
\item on part d'une $\overline{\mathbb F}_l$-reprÈsentation $\bar \rho_{\mathfrak m}$ de $\gal(\bar F/F)$ s'Ècrivant comme la rÈduction
modulo $l$  de $ \rho_{ \widetilde{\mathfrak m} }$ o˘ $ \widetilde{\mathfrak m}$ est associÈ ‡ une reprÈsentation automorphe 
$\Pi$ de niveau $I$, 

\item et on cherche des conditions pour l'existence d'une reprÈsentation automorphe $\Pi'$ de niveau $I'$ avec $I_v \subsetneq I'_v$, quitte ‡ augmenter $I$ en des places annexes $w \neq v$, de sorte que si $ \widetilde{\mathfrak m}'$ est  l'idÈal premier de l'algËbre de Hecke 
anÈmique associÈ ‡ $\Pi'$, alors $\widetilde{\mathfrak m}' \subset \mathfrak m$, autrement dit si $ \rho_{ \widetilde{\mathfrak m}',v}$ est la reprÈsentation galoisienne construite par \cite{h-t}, alors
sa rÈduction modulo $l$ est isomorphe ‡ $\bar \rho_{\mathfrak m}$.
\end{itemize}
Lorsque la composante en $p$ du sous-groupe compact considÈrÈ est parahorique,
la condition de non ramification en $p$ dans le cas de $GL_2$, est remplacÈe par la dÈgÈnÈrescence
de la monodromie au sens suivant. Le logarithme de la monodromie ‡ la place $v$ de $ \rho_{ \widetilde{\mathfrak m} }$
dÈfinit un opÈrateur nilpotent $N_{ \widetilde{\mathfrak m},v}$ de $GL_d$ dont la taille des blocs de Jordan fournit 
une partition $ \underline{d_{ \widetilde{\mathfrak m},v}}$ de $d$. En supposant $\bar \rho_{\mathfrak m}$ irrÈductible et en prenant $l \geq d_{\widetilde{\mathfrak m},v}-1$,
o˘ $d_{\widetilde{\mathfrak m},v}$ est l'indice de nilpotence de $N_{ \widetilde{\mathfrak m},v}$,
alors $N_{ \widetilde{\mathfrak m},v}$ possËde une structure entiËre unique et admet
donc une rÈduction modulo $l$ fournissant une partition $ \underline{d_{\mathfrak m,v}}$ ne dÈpendant pas du choix de $\Pi$. 
La condition de non ramification pour $GL_2$ devient alors: \emph{pour la relation
de dominance usuelle sur les partitions, $ \underline{d_{\mathfrak m,v}}$ est strictement plus 
petite que $\underline{d_{ \widetilde{\mathfrak m},v}}$.}

La dÈmonstration repose sur l'Ètude de la torsion dans la cohomologie des
variÈtÈs de Shimura dites de Kottwitz-Harris-Taylor, et sur l'observation que cette torsion
se relËve, cf. le rÈsultat principal de \cite{boyer-mrl}, en caractÈristique $0$ quitte ‡ augmenter le niveau en une place annexe.
L'idÈe consiste alors ‡ jouer avec cette propriÈtÈ 
\begin{itemize}
\item en la place $p$ o˘  ‡ l'aide des hypothËses du thÈorËme \ref{theo-principal}, on parvient ‡ 
diminuer le niveau en $p$ tout en gardant une torsion non triviale,

\item puis en augmentant le niveau en une place annexe quelconque, on relËve cette torsion en 
caractÈristique nulle.
\end{itemize}

On Ètudie en outre, cf. le corollaire \ref{coro-principal}, l'existence d'un $\Pi$ tel que 
$ \underline{d_{\mathfrak m,v}} = \underline{d_{ \widetilde{\mathfrak m},v}}$
ainsi, cf. le corollaire \ref{coro-appli}, que des conditions explicites sur $ \widetilde{\mathfrak m}$ pour que 
$N_{ \widetilde{\mathfrak m},v}$ en $v$ 
ne dÈgÈnËre pas i.e. tel que la partition en bloc de Jordan de la rÈduction modulo $l$ de $N_{ \widetilde{\mathfrak m},v}$ soit Ègale ‡
$ \underline{d_{ \widetilde{\mathfrak m},v}}$.

Nous remercions V. SÈcherre pour nous avoir expliquÈ le lemme \ref{lem-secherre}. ainsi que V. Lafforgue pour ses nombreuses remarques
sur une premiËre version de ce travail.

\section{DÈgÈnÈrescence de la monodromie et diminution du niveau}

\renewcommand{\thesmfthm}{\arabic{section}.\arabic{subsection}.\arabic{smfthm}}

\renewcommand{\theequation}{\arabic{section}.\arabic{subsection}.\arabic{smfthm}}

%\backmatter

\subsection{Rappels sur les $\overline \Qm_l$-reprÈsentations de $GL_d(K)$}

Notons $K$ un corps local non archimÈdien dont le corps rÈsiduel est de cardinal $q$ une puissance 
d'un nombre premier $p$. Une racine carrÈe $q^{\frac{1}{2}}$ de $q$ dans $\overline \Qm_l$ Ètant fixÈe,
pour $k \in \frac{1}{2} \Zm$, nous noterons $\pi\{ k \}$ la reprÈsentation tordue de $\pi$ o˘ l'action 
de $g \in GL_n(K)$ est donnÈe par $\pi(g) \nu(g)^k$ avec
$\nu: g \in GL_n(K) \mapsto q^{-\val (\det g)}$.

\begin{defis}
Soit $P=MN$ un parabolique standard de $GL_n$ de LÈvi $M$ et de radical unipotent $N$.
On note $\delta_P:P(K) \rightarrow \overline \Qm_l^\times$ l'application dÈfinie par
$$\delta_P(h)=|\det (\ad(h)_{|\lie N})|^{-1}.$$
Pour $(\pi_1,V_1)$ et $(\pi_2,V_2)$ des reprÈsentations de respectivement $GL_{n_1}(K)$ 
et $GL_{n_2}(K)$, et $P_{n_1,n_2}$ le parabolique standard de $GL_{n_1+n_2}$ de Levi 
$M=GL_{n_1} \times GL_{n_2}$ et de radical unipotent $N$, 
$$\pi_1 \times \pi_2$$
dÈsigne l'induite parabolique normalisÈe de $P_{n_1,n_2}(K)$ ‡ $GL_{n_1+n_2}(K)$ de 
$\pi_1 \otimes \pi_2$ c'est ‡ dire
l'espace des fonctions $f:GL_{n_1+n_2}(K) \rightarrow V_1 \otimes V_2$ telles que
$$f(nmg)=\delta_{P_{n_1,n_2}}^{-1/2}(m) (\pi_1 \otimes \pi_2)(m) \Bigl ( f(g) \Bigr ),
\quad \forall n \in N, ~\forall m \in M, ~ \forall g \in GL_{n_1+n_2}(K).$$
\end{defis}
%
%\begin{nota}
%Pour $\pi_1$ et $\pi_2$ des reprÈsentations de respectivement $GL_{n_1}(K)$ et 
%$GL_{n_2}(K)$, on notera 
%$$\pi_1 \overrightarrow{\times} \pi_2:= \pi_1\{ -n_2/2 \} \times \pi_2 \{ n_1/2 \} \hbox{ et }
%\pi_1 \overleftarrow{\times} \pi_2 := \pi_1 \{ n_2/2 \} \times \pi_2 \{ -n_1/2 \} .$$
%\end{nota}

Rappelons qu'une reprÈsentation irrÈductible $\pi$ de $GL_n(K)$ est dite \textit{cuspidale} 
si elle n'est pas isomorphe ‡ un sous-quotient d'une induite parabolique propre.

\begin{nota}
Soient $g$ un diviseur de $d=sg$ et $\pi$ une reprÈsentation cuspidale
irrÈductible de $GL_g(K)$. L'unique quotient (resp. sous-reprÈsentation) irrÈductible de
$\pi\{ \frac{1-s}{2} \} \times \pi\{\frac{3-s}{2} \} \times \cdots \times \pi\{ \frac{s-1}{2} \}$
est notÈ $\st_s(\pi)$ (resp. $\speh_s(\pi)$).
\end{nota}

Notons $\OC_K$ l'anneau des entiers de $K$ et $\varpi_K$ une uniformisante. 
Le sous-groupe compact ouvert de $GL_d(\OC_K)$ 
des ÈlÈments dont la rÈduction modulo 
$\varpi_K$ est triangulaire supÈrieure, est le classique sous-groupe d'Iwahori.
Rappelons que toute reprÈsentation irrÈductible de $GL_d(K)$ admettant des vecteurs non nuls
invariants sous le sous-groupe d'Iwahori est, avec les notations prÈcÈdentes 
un sous-quotient d'une induite $\chi_{1} \times \cdots \times \chi_{d}$ o˘ les
$\chi_{i}$ sont des caractËres de $K^\times$ uniquement dÈfinis ‡ l'ordre prËs.

\begin{nota} 
Pour toute reprÈsentation irrÈductible $\pi$ de $GL_d(K)$ ayant des vecteurs non nuls
invariants par le sous-groupe d'Iwahori, on note\footnote{Plus prÈcisÈment $V(\pi)$ est un 
multi-ensemble, i.e. on garde en mÈmoire la rÈpÈtition des $\chi(\varpi_K)$.}
$$V(\pi)= \Bigl \{ \chi_{i}(\varpi_K):~i=1,\cdots,d \Bigr \}$$
o˘ les caractËres $\chi_{i}$ sont tels que $\pi$ est un sous-quotient de l'induite
$\chi_{1} \times \cdots \times \chi_{d}$.
\end{nota}

\rem Avec les notations prÈcÈdentes, on a
$$V(\st_{t}(\chi))= \Bigl \{ \chi(\varpi_K)q^{\frac{1-t}{2}}, \chi(\varpi_K)q^{\frac{1-t+2}{2}},\cdots,
\chi(\varpi_K)q^{\frac{t-1}{2}} \Bigr \}.$$

\begin{defi}
…tant donnÈe une partition de $d$
$$\underline m=(m_1 \geq m_2 \geq \cdots \geq m_r \geq 1) \hbox{ avec }
d=m_1+\cdots+m_r,$$ 
le sous-groupe parahorique standard associÈ $\Iw(\underline m)$ est par dÈfinition l'ensemble des ÈlÈments de 
$GL_d(\OC_K)$ dont la rÈduction modulo $\varpi_K$ appartient au parabolique standard $P_{m_1,\cdots,m_r}$ de Levi
$GL_{m_1} \times GL_{m_2} \times \cdots \times GL_{m_r}$:
$$\Iw(\underline m):=\ker \Bigl ( GL_d(\OC_K) \longrightarrow P_{m_1,m_2,\cdots,m_r}(\OC_K/(\varpi_K)) \Bigr ).$$
Un sous-groupe parahorique est alors un conjuguÈ d'un parahorique
standard par un ÈlÈment de $GL_d(\OC_K)$.
\end{defi}

\rem Pour $m_1=m_2=\cdots=m_d=1$, on retrouve le classique sous-groupe d'Iwahori.

On associe habituellement ‡ une partition $\underline m=(m_1 \geq \cdots \geq m_r)$ de $d$, 
un diagramme de Ferrers
dont la $i$-Ëme ligne est de longueur $m_i$. Les longueurs $t_1 \geq \cdots \geq t_{m_1}$
des colonnes du diagramme de Ferrers dÈfinissent alors la partition 
$$\underline m^*=(t_1 \geq \cdots \geq t_{m_1})$$
conjuguÈe de $\underline m$. 

\begin{nota} \label{nota-partition1}
Pour $\underline m$ une partition de conjuguÈe $\underline m^*=(t_1 \geq t_2 \geq \cdots \geq t_{m_1})$, 
on note $\underline m^{(1)}$ la partition dont la conjuguÈe est $(t_2 \geq t_3 \geq \cdots \geq t_{m_1})$.
\end{nota}

\rem Autrement dit $\underline m^{(1)}$ est la partition obtenue ‡ partir de $\underline m$ en supprimant
sa premiËre colonne.

\begin{defi} \label{defi-partition-ordre}
 On dira d'une partition $\underline m=(m_1 \geq m_2 \geq \cdots \geq m_r)$ de $n$ 
qu'elle est contenue dans une partition $\underline m'=(m'_1 \geq m'_2 \geq \cdots \geq m'_{r'})$ de $n'$
si $r \leq r'$ et si pour tout $i=1,\cdots,r$ on a $m_i \leq m'_i$.
\end{defi}

On rappelle la relation de dominance usuelle sur les partitions 
$$\underline n=(n_1 \geq n_2 \geq \cdots) \leq \underline m=(m_1 \geq m_2 \geq \cdots) \Leftrightarrow
\forall k \geq 1 \sum_{i=1}^k n_i \leq \sum_{i=1}^k m_i.$$
La relation $\underline n \leq \underline m$ est Èquivalente ‡ 
$\underline m^* \leq \underline n^*$ sur les partitions conjuguÈes.

\begin{lemm} \label{lem-secherre}
Soit $\pi \simeq \st_{t_1}(\chi_1) \times \cdots \times \st_{t_s}(\chi_s)$ avec $t_1+\cdots+t_s=d$
et o˘ $\chi_1,\cdots,\chi_s$ sont des caractËres de $K^\times$.  
L'ensemble des sous-groupes parahoriques $P$ 
tels que $\pi$ ait des vecteurs non nuls $P$-fixes,
admet un plus grand ÈlÈment  dont les tailles des blocs sont, ‡ conjugaison prËs, ceux de la partition
$\underline d(\pi)$ conjuguÈe ‡ $(t_1 \geq \cdots \geq t_s)$.
\end{lemm}

\begin{proof}
Notons $\KC=GL_d(\OC_K)$ le compact maximal de $GL_d(K)$, puis $\KC(1)$ son pro-$p$ radical.
Rappelons que $\pi$ a des vecteurs non nuls $P$-fixes si et seulement si 
l'espace $\KC(\pi)$ des vecteurs non nuls $\KC(1)$-fixes de $\pi$ vu comme reprÈsentation de 
$\KC/\KC(1)$ a des vecteurs non nuls fixes par $P':=P/\KC(1)$, c'est-‡-dire si $\KC(\pi)^{U'}$, o˘
$U'$ le radical unipotent de $P'$, contient le caractËre trivial de $M'=P'/U'$. 

En appliquant l'involution de Zelevinski $Z$, on se ramËne ‡ la propriÈtÈ que $\KC(Z(\pi))^{U'}$
contient un facteur non dÈgÈnÈrÈ, c'est-‡-dire au fait que $Z(\pi)$ est $\lambda$-dÈgÈnÈrÈe,
o˘ $\lambda$ dÈsigne la partition de $d$ donnÈe par les blocs de $M'$, 
au sens de la thÈorie des modËles de Whittaker dÈgÈnÈrÈs, cf. \cite{vigneras-induced} \S V.5.
Le rÈsultat dÈcoule alors de loc. cit.
\end{proof}

\begin{defi} \label{defi-TY1}
¿ la reprÈsentation $\pi\simeq \st_{t_1}(\chi_1) \times \cdots \times \st_{t_s}(\chi_s)$ on associe
$T(\pi)$ le diagramme de Ferrers ÈtiquetÈ par $V(\pi)$ dÈfini comme suit:
\begin{itemize}
\item les longueurs des lignes de ce tableau sont les $t_1 \geq t_2 \geq \cdots \geq t_s$

\item et on Ètiquette la $i$-Ëme ligne de gauche ‡ droite avec dans l'ordre les
$\chi_{i}q^{\frac{1-t_i}{2}},\cdots,\chi_{i}q^{\frac{t_i-1}{2}}$.
\end{itemize}
\end{defi}

\subsection{ReprÈsentation galoisienne associÈe ‡ $\mathfrak m$}
\label{para-nota}

Soient $F^+$ un corps totalement rÈel et $E/\Qm$ une extension quadratique imaginaire: 
on considËre alors le corps $F=EF^+$ qui est CM. Pour toute place $w$ de $F$, on notera 
\begin{itemize}
\item $F_w$ son localisÈ en $w$, 
\item $\OC_w$ son anneau des entiers d'uniformisante $\varpi_w$ et
\item $q_w$ le cardinal du corps rÈsiduel $\kappa(w):=\OC_w/(\varpi_w)$.
\end{itemize}

Soit $B$ une algËbre ‡ 
division centrale sur $F$ de dimension $d^2$ telle qu'en toute place $x$ de $F$,
$B_x$ est soit dÈcomposÈe soit une algËbre ‡ division et on suppose $B$ 
munie d'une involution de
seconde espËce $*$ telle que $*_{|F}$ est la conjugaison complexe $c$. Pour
$\beta \in B^{*=-1}$, on note $\sharp_\beta$ l'involution $x \mapsto x^{\sharp_\beta}=\beta x^*
\beta^{-1}$ et $G/\Qm$ le groupe de similitudes, notÈ $G_\tau$ dans \cite{h-t}, dÈfini
pour toute $\Qm$-algËbre $R$ par 
$$
G(R)  \simeq   \{ (\lambda,g) \in R^\times \times (B^{op} \otimes_\Qm R)^\times  \hbox{ tel que } 
gg^{\sharp_\beta}=\lambda \}
$$
avec $B^{op}=B \otimes_{F,c} F$. 
Si $x$ est une place de $\Qm$ dÈcomposÈe $x=yy^c$ dans $E$ alors 
\addtocounter{smfthm}{1}
\begin{equation} \label{eq-facteur-v}
G(\Qm_x) \simeq (B_y^{op})^\times \times \Qm_x^\times \simeq \Qm_x^\times \times
\prod_{z_i} (B_{z_i}^{op})^\times,
\end{equation}
o˘, en identifiant les places de $F^+$ au dessus de $x$ avec les places de $F$ au dessus de $y$,
$x=\prod_i z_i$ dans $F^+$.
Dans \cite{h-t} lemme I.7.1, les auteurs justifient l'existence d'un $G$ comme ci-dessus tel que pour tous
tels $d, E^+$ et $F$:
\begin{itemize}
\item si $x$ est une place de $\Qm$ qui n'est pas dÈcomposÈe dans $E$ alors
$G(\Qm_x)$ est quasi-dÈployÈ;

\item les invariants de $G(\Rm)$ sont $(1,d-1)$ pour le plongement $\tau$ et $(0,d)$ pour les
autres. 
\end{itemize}
\emph{On fixe} ‡ prÈsent un nombre premier $p=uu^c$ dÈcomposÈ dans $E$ tel qu'il existe une 
place $v$ de $F$ au dessus de $u$ avec 
$$(B_v^{op})^\times \simeq GL_d(F_v).$$
On note 
$$v_1=v, v_2,\cdots, v_r$$ 
les places de $F$ au dessus de $u$. Avec un abus coupable
de notation, on utilisera $G(F_v)$ pour dÈsigner le
facteur en $v$ de la formule (\ref{eq-facteur-v}), isomorphe donc ‡ $GL_d(F_v)$.

\begin{defi}  \label{defi-Kv}
Soit $\IC$ l'ensemble des sous-groupes compacts ouverts \og assez petits \fg{}\footnote{tels qu'il existe 
une place $x \neq p$ pour laquelle la projection de $U^v$ sur $G(\Qm_x)$ 
ne contienne aucun ÈlÈment d'ordre fini 
autre que l'identitÈ, cf. \cite{h-t} bas de la page 90}  de  $G(\Am^\oo)$, de la forme $U^v K_v$ avec
\begin{itemize}
\item $U^v=U^p \times \Zm_p^\times \times \prod_{i=2}^r 
\ker \bigl ( \OC_{B_{v_i}}^\times \longto (\OC_{B_{v_i}}/\PC_{v_i}^{n_i})^\times \bigr )$
pour des entiers $n_2,\cdots,n_r$ positifs ou nuls,

\item et o˘ $K_v$ est un sous-groupe parahorique. 

\item Pour $I=U^vK_v \in \IC$ comme ci-avant, on notera $I^v=U^v$ et $I_v=K_v$.
\end{itemize}
\end{defi}

On note alors, cf. le \S \ref{para-KHT}, $(X_I)_{I \in \IC}$ 
le systËme projectif des variÈtÈs de Shimura, dites de Kottwitz-Harris-Taylor, 
associÈ au groupe $G$ au dessus de $\spec \OC_v$ tel qu'il
est introduit dans \cite{h-t}.

\begin{nota}
Pour $I \in \IC$, on note $\spl(I)$ l'ensemble des places $w$ de $F$
telles que $p_w:=w_{|\Qm}$ est dÈcomposÈe dans $E$ et, cf. la formule (\ref{eq-facteur-v}), 
la composante locale 
$I_w$ de $I$ ‡ la place $w$ est isomorphe ‡ $GL_d(\OC_w)$.
\end{nota}

Fixons pour la suite un isomorphisme $\iota:\overline \Qm_l \simeq \Cm$.
\'Etant donnÈe une $\overline \Qm_l$-reprÈsentation algÈbrique irrÈductible $\xi$ 
de $G(\Qm)$,
rappelons qu'une $\Cm$-reprÈsentation irrÈductible $\Pi_{\oo}$ de $G(\Am_{\oo})$ est dite 
$\xi$-cohomologique s'il existe un entier $i$ tel que
$$H^i((\lie ~G(\Rm)) \otimes_\Rm \Cm,U_\tau,\Pi_\oo \otimes \iota(\xi^\vee)) \neq (0)$$
o˘ $U_\tau$ est un sous-groupe compact modulo le centre de $G(\Rm)$, maximal, cf. \cite{h-t} p.92. 
Une $\overline \Qm_{l}$-reprÈsentation irrÈductible $\Pi^{\oo}$ de $G(\Am^{\oo})$ sera dit automorphe 
$\xi$-cohomologique s'il existe une $\Cm$-reprÈsentation $\xi$-cohomologique $\Pi_\oo$ de 
$G(\Am_\oo)$ telle que
$\iota\Bigl ( \Pi^{\oo} \Bigr ) \otimes \Pi_{\oo}$ est une $\Cm$-reprÈsentation automorphe de $G(\Am)$.

\rem Si $\iota':\overline \Qm_l \simeq \Cm$ est un autre choix d'isomorphisme et si
$\Pi^\oo$ est automorphe $\xi$-cohomologique relativement ‡ $\iota$, alors, 
d'aprËs la formule de Matsushima,
$(\iota')^{-1} \circ \iota (\Pi^\oo)$ l'est relativement ‡ $\iota'$.

\begin{defi} \label{nota-spl2}
Pour $l$ un nombre premier distinct de $p$ et $I \in \IC$ un niveau fini, soit
$$\Tm_I:=\Zm_l \bigl [T_{w,i}:~w \in \spl(I) \hbox{ et } i=1,\cdots,d \bigr ],$$
l'algËbre de Hecke \og anÈmique \fg{} associÈe ‡ $\spl(I)$, o˘ $T_{w,i}$ est la fonction caractÈristique de
$$GL_d(\OC_w) \diag(\overbrace{\varpi_w,\cdots,\varpi_w}^{i}, \overbrace{1,\cdots,1}^{d-i} ) 
GL_d(\OC_w) \subset  GL_d(F_w).$$
\end{defi}

\rem L'algËbre $\Tm_I$ ne dÈpend que de $\spl (I)$ au sens o˘ si $J \in _IC$ est tel que
$\spl (J)=\spl (I)$ alors $\Tm_J=\Tm_I$.

On fixe ‡ prÈsent une reprÈsentation algÈbrique $\xi$ de $G(\Qm)$ et on note, cf. le \S \ref{para-xi},
$V_{\xi,\overline \Zm_l}$ le $\overline \Qm_l$-systËme local associÈ, dÈfini sur tout 
$X_I$ pour $I \in \IC$.
On note alors $\Tm_I(\xi)$ l'image de $\Tm_I$ dans les endomorphismes $\overline \Zm_l$-linÈaires
du quotient libre $H^{d-1}_{free}(X_{I,\bar \eta_v},V_{\xi,\overline \Zm_l})$ de la cohomologie en
degrÈ mÈdian de la fibre  gÈnÈrique gÈomÈtrique de $X_I$, ‡ coefficients dans 
$V_{\xi,\overline \Zm_l}$.

\rem Dans la suite nous ne nous intÈresserons qu'aux systËmes de valeurs propres de Hecke
$\mathfrak m$ de $\Tm_I$ donnant lieu, cf. le dÈbut du \S \ref{para-enonce}, 
‡ une reprÈsentation galoisienne $\bar \rho_{\mathfrak m}$
irrÈductible de sorte qu'il suffit de considÈrer l'image de $\Tm_I$ dans la cohomologie en degrÈ
mÈdian. En outre on a vu dans \cite{boyer-imj} que tout systËme de valeurs propres de Hecke
dans la $\overline \Fm_l$-cohomologie, se relevait, quitte ‡ augmenter le niveau $I$, en une
reprÈsentation automorphe tempÈrÈe entiËre, ce qui justifie de ne regarder que l'image de $\Tm_I$
dans le quotient libre de la cohomologie en degrÈ mÈdian.

Les idÈaux premiers minimaux de $\Tm_I(\xi)$ sont les idÈaux premiers de $\Tm_I(\xi)$ 
au dessus de l'idÈal nul de $\Zm_l$ et sont donc en
bijection naturelle avec les idÈaux premiers de $\Tm_I(\xi) \otimes_{\Zm_l} \Qm_l$. Ainsi pour
un tel idÈal $\widetilde{\mathfrak m}$ premier minimal, $(\Tm_I(\xi) \otimes_{\Zm_l} \Qm_l)
/\widetilde{\mathfrak m}$ est une extension finie $K_{\widetilde{\mathfrak m}}$ de $\Qm_l$.

\rem
Un idÈal $\widetilde{\mathfrak m}$ premier minimal de $\Tm_I(\xi)$ 
est dit $\xi$-cohomologique au sens o˘ il existe une $\overline \Qm_l$-reprÈsentation automorphe 
$\xi$-cohomologique 
$\Pi$ de $G(\Am)$ possÈdant des vecteurs non nuls fixes sous $I$ et 
telle que pour tout $w \in \spl(I)$, les paramËtres de Satake de $\Pi_{p_w}$ 
sont donnÈs par les images des $T_w^{(i)} \in K_{\widetilde{\mathfrak m}}:=(\Tm_I(\xi) 
\otimes_{\Zm_l} \Qm_l)/\widetilde{\mathfrak m}$,
o˘ $K_{\widetilde{\mathfrak m}}$ est une extension finie de $\Qm_l$.  On notera
q'un tel $\Pi$ n'est pas nÈcessairement unique mais dÈfinit une unique classe d'Èquivalence 
proche au sens de \cite{y-t} que l'on notera $\Pi_{\widetilde{\mathfrak m}}$.

On fixe une clÙture algÈbrique $\overline \Qm_l$ et on notera $\overline \Tm_I(\xi):=\Tm_I(\xi)
\otimes_{\Zm_l}
\overline \Zm_l$ de sorte que $K_{\widetilde{\mathfrak m}}$ s'injecte canoniquement dans 
$\overline \Qm_l=(\overline \Tm_I(\xi) \otimes_{\overline \Zm_l} \overline \Qm_l)/\widetilde{\mathfrak m}
\otimes_{\Zm_l} \overline \Zm_l.$

Dans la suite nous ne considÈrerons que des idÈaux premiers
$\xi$-cohomologiques, ce qui permet de dÈfinir leur reprÈsentation galoisienne associÈe au sens suivant.

\begin{defi}
On note 
$$\rho_{\widetilde{\mathfrak m}}:\gal(\bar F/F) \longrightarrow GL_d(\overline \Qm_l)$$ 
la reprÈsentation galoisienne associÈe ‡ un tel $\Pi$ d'aprËs
\cite{h-t} et \cite{y-t}.
\end{defi}

\rem Pour toute place $w \in \spl(I)$, les valeurs propres de $\rho_{\widetilde{\mathfrak m}}(\frob_w)$
sont donnÈes par les paramËtres de Satake de $\Pi_v$ et appartiennent donc ‡ 
$K_{\widetilde{\mathfrak m}}$. 
%de sorte que, $\spl(I)$ Ètant de densitÈ analytique $1$ parmi les placesde $E$, d'aprËs le thÈorËme de Cebotarev,  $K_{\widetilde{\mathfrak m}}$ est le corps de rationalitÈ de $\rho_{\widetilde{\mathfrak m}}$, i.e. le corps de dÈfinition de son caractËre.

La restriction de $\rho_{\widetilde{\mathfrak m}}$
au groupe de Galois local en $v$ s'identifie ‡ une reprÈsentation de Weil-Deligne
$(\sigma_{\widetilde{\mathfrak m},v},N_{\widetilde{\mathfrak m},v})$ o˘ 
$N_{\widetilde{\mathfrak m},v}$ est le logarithme de la partie unipotente de la monodromie locale.  
Notons que cet opÈrateur nilpotent est dÈfini via la somme finie 
$$\ln(1-x)=- \sum_{k=1}^{d_{\widetilde{\mathfrak m},v} -1} \frac{x^k}{k},$$ 
o˘  $d_{\widetilde{\mathfrak m},v}$ est
l'ordre de nilpotence de $N_{\widetilde{\mathfrak m},v}$. En particulier 
\begin{itemize}
\item si $\rho_{\widetilde{\mathfrak m}}$ est entiËre, i.e. s'il existe un rÈseau stable
$\Gamma$ et si $l \geq d_{\widetilde{\mathfrak m},v}-1$, alors l'opÈrateur 
$N_{\widetilde{\mathfrak m},v}$ est dÈfini sur $\Gamma$
et on note $\overline N_{\widetilde{\mathfrak m},v,\Gamma}$ sa rÈduction modulo l'idÈal maximal 
de l'anneau des entiers de $\overline \Qm_l$. 
Dans la suite $l$ vÈrifiera toujours l'inÈgalitÈ $l \geq d_{\widetilde{\mathfrak m},v}-1$.

\item Si en outre la rÈduction modulo $l$
de $\rho_{\widetilde{\mathfrak m}}$ est irrÈductible alors tous
les rÈseaux stables $\Gamma$ sont homothÈtiques et on notera simplement 
$\overline N_{\widetilde{\mathfrak m},v}$.
\end{itemize}

\begin{nota} 
Tout ÈlÈment nilpotent de $GL_d(F_v)$ admet une forme de Jordan associÈe ‡ une
unique partition de $d$ donnÈe par la taille de ses blocs de Jordan. On note 
$$\underline{d_{\widetilde{\mathfrak m},v}} \quad \hbox{resp. } \underline{d_{\mathfrak m,v}}$$
la partition associÈe ‡ $N_{\widetilde{\mathfrak m},v}$ (resp. ‡ $\overline{N_{\mathfrak m,v}}$).
\end{nota}

On introduit alors les diagrammes de Ferrers ÈtiquetÈs $T_{\widetilde{\mathfrak m},v}$
et $T_{\mathfrak m,v}$ associÈs respectivement ‡  $N_{\widetilde{\mathfrak m},v}$ et 
$\overline{N_{\mathfrak m,v}}$ dont les longueurs des lignes sont les tailles, 
classÈes par ordre dÈcroissant, des blocs de Jordan de l'opÈrateur de monodromie associÈ.
En particulier $d_{\widetilde{\mathfrak m},v}$ est la longueur de la premiËre ligne.
On rappelle par ailleurs que les longueurs des colonnes de $T_{\widetilde{\mathfrak m},v}$
sont les $\dim \ker N_{\widetilde{\mathfrak m},v}^{i+1} - \dim \ker N_{\widetilde{\mathfrak m},v}^i$.
On a une formule analogue pour $T_{\mathfrak m,v}$ de sorte qu'en particulier
$$ \underline{d_{\mathfrak m,v}} \leq \underline{d_{\widetilde{\mathfrak m},v}}.$$

\rem Avec les notations de la dÈfinition \ref{defi-TY1} et la description
de la correspondance de Langlands locale, $T_{\widetilde{\mathfrak m},v}$ est
le diagramme de Ferrers $T(\Pi_{\widetilde{\mathfrak m},v})$.

\subsection{…noncÈ du thÈorËme principal}
\label{para-enonce}

¿ prÈsent on notera simplement $\Tm_I$ pour $\Tm_I(\xi)$.
ConsidÈrons un idÈal maximal $\mathfrak m$ de $\Tm_I$ qui est $\xi$-cohomologique au sens o˘ au 
moins un idÈal premier minimal $\widetilde{\mathfrak m} \subset \mathfrak m$ l'est.
Pour un tel $\widetilde{\mathfrak m}$, on a une injection 
$\Tm_I/\widetilde{\mathfrak m} \hookrightarrow \OC_{\widetilde{\mathfrak m}}$
o˘ $\OC_{\widetilde{\mathfrak m}}$ dÈsigne l'anneau des entiers de $K_{\widetilde{\mathfrak m}}$:
la composÈe de cette injection avec la rÈduction modulo l'idÈal maximal de 
$\OC_{\widetilde{\mathfrak m}}$ coÔncide alors avec la 
surjection $\Tm_I/\widetilde{\mathfrak m} \twoheadrightarrow \Tm_I/\mathfrak m$.
On peut ainsi parler de la rÈduction modulo $\mathfrak m$ des paramËtres de Satake
$S_{\widetilde{\mathfrak m}}(w)$ lesquels ne dÈpendent donc que de $\mathfrak m$ et sont donc
donnÈs par le multi-ensemble des racines du polynÙme de Hecke en $w$
$$P_{\mathfrak{m},w}(X):=\sum_{i=0}^d(-1)^i q_w^{\frac{i(i-1)}{2}} \overline{T_{w,i}} X^{d-i} \in \overline 
\Fm_l[X]$$
i.e.
$$S_{\mathfrak{m}}(w) := \bigl \{ \lambda \in \Tm_I/\mathfrak m \simeq \overline \Fm_l \hbox{ tel que }
P_{\mathfrak{m},w}(\lambda)=0 \bigr \}.$$
Ainsi tout idÈal premier $\widetilde{\mathfrak m} \subset \mathfrak m$ qui est $\xi$-cohomologique,
fournit une (ou des) reprÈsentation automorphe dont les paramËtres de Satake modulo $l$
en toute place de $\spl(I)$ sont donnÈs par $\mathfrak m$; pour deux tels idÈaux premiers
on obtient ainsi des reprÈsentations automorphes dites congruentes, au sens o˘ elles partagent
les mÍmes paramËtres de Satake en presque toutes les places.

\begin{defi} \label{defi-deter}
On dira que $\overline{N_{\mathfrak m,v}}$ est dÈtÈriorÈ relativement ‡ $\widetilde{\mathfrak m}$,
si, cf. la notation \ref{nota-partition1} et la dÈfinition \ref{defi-partition-ordre}, 
$\underline{d_{\widetilde{\mathfrak m},v}}^{(1)}$ n'est pas contenu dans $\underline{d_{\mathfrak m,v}}$.
\end{defi}

\begin{theo} \label{theo-principal}
Soient 
\begin{itemize}
\item $I \in \IC$ un sous-groupe compact ouvert tel que modulo $\varpi_v$, la composante $I_v$ de $I$ 
‡ la place $v$, cf. la formule (\ref{eq-facteur-v}), est 
un sous-groupe parahorique relativement ‡ une partition $\underline m=(m_1 \geq \cdots \geq m_r)$ 
de $d$ avec $r>1$,

\item et, cf la remarque suivant la dÈfinition \ref{nota-spl2}, $\mathfrak m$ un idÈal maximal de $\Tm_I$, 
\end{itemize}
tels que $\underline m$ soit maximal relativement au fait qu'il existe un idÈal premier minimal
$\widetilde{\mathfrak m} \subset \mathfrak m$ ainsi qu'une reprÈsentation automorphe irrÈductible 
$\Pi \in \Pi_{\widetilde{\mathfrak m}}$ possÈdant des vecteurs non nuls invariants sous $I$.
On suppose alors que:
\begin{enumerate}[1)]
\item $\overline \rho_{\mathfrak m}$, est irrÈductible

\item $l \geq d_{\widetilde{\mathfrak m},v}-1$;

\item la partition $\underline{d_{\mathfrak m,v}}$ est strictement plus petite que 
la partition $\underline m^*$ conjuguÈe ‡ $\underline m$;

\item au choix
\begin{itemize}
\item[(i)] soit $\overline{N_{\mathfrak m,v}}$ est dÈtÈriorÈ relativement ‡ $\widetilde{\mathfrak m}$,

\item[(ii)] soit la composante locale en $v$ de $\Pi_{\widetilde{\mathfrak m}}$ n'est pas de la 
forme $\chi_{v,1} \times \chi_{v,2} \times ?$ pour $\chi_{v,1}$ et $\chi_{v,2}$ des 
caractËres de $F_v^\times$ tels que $\chi_{v,2} \equiv \chi_{v,1} \nu \mod l$ o˘
$$\nu:x \in F_v^\times \mapsto q^{-\val x},$$
et $?$ une reprÈsentation quelconque.
\end{itemize}

%
%\item le multi-ensemble $V_{\mathfrak m}(v)$ des valeurs propres de $\overline \rho_{\mathfrak m}(\frob_v)$
%vÈrifie les deux conditions suivantes:
%\begin{enumerate}[i)]
%\item les multiplicitÈs dans $V_{\mathfrak m}(v)$ sont Ègales ‡ $1$, i.e. $V_{\mathfrak m}(v)$ 
%est un ensemble de cardinal $d$,
%
%\item $V_{\mathfrak m}(v) \cap q_v^{r-1} V_{\mathfrak m}(v)$ est de cardinal $\leq m_r$.
%\end{enumerate}
\end{enumerate}
Alors pour toute place $w \in \spl(I)$ distincte de $v$, il existe un sous-groupe parahorique $I'_v$ (resp. $I'_w$) 
associÈe ‡ une partition $\underline m_v' >\underline m$ 
(resp. $\underline m_w'$) ainsi qu'un idÈal maximal $\xi$-cohomologique
$\mathfrak m'$ de $\Tm_{I'}$, o˘ $I':=I'_v I'_w I^{v,w}$, tel que 
$$\overline \rho_{\mathfrak m'} \simeq \overline \rho_{\mathfrak m}.$$
Autrement dit $\overline \rho_{\mathfrak m}$ provient d'une reprÈsentation automorphe de niveau $I'$.
\end{theo}

\rem Comme $l \geq d_{\widetilde{\mathfrak m},v}-1$ et $\overline \rho_{\mathfrak m}$ est irrÈductible, 
$\overline N_{\mathfrak m,v}$ est bien dÈfini indÈpendamment du rÈseau stable. 
Par maximalitÈ de $\underline m$, on a $d_{\widetilde{\mathfrak m},v}=\underline m^*$ de sorte que
l'hypothËse $\underline{d_{\mathfrak m,v}} < \underline{d_{\widetilde{\mathfrak m},v}}$
est clairement nÈcessaire pour obtenir un ÈnoncÈ de diminution du niveau. 

Le principe de la dÈmonstration
consiste ‡ calculer la cohomologie en niveau $I$ de la variÈtÈ de Shimura associÈe ‡ $G$, cf.
le \S \ref{para-KHT}, en utilisant la suite spectrale de Rapoport-Zink en la place $v$. On constate alors
\begin{itemize}
\item $\overline \rho_{\mathfrak m}$ Ètant irrÈductible, sur $\overline \Qm_l$ cette suite spectrale
dÈgÈnËre en $E_1$, tous les groupes de cohomologie Ètant concentrÈs en degrÈ mÈdian;

\item l'hypothËse $\underline{d_{\mathfrak m,v}} < \underline{d_{\widetilde{\mathfrak m},v}}$
impose que certains des termes initiaux de cette suite spectrale ont de la torsion non triviale.
\end{itemize}
Ainsi la cohomologie de toute la variÈtÈ de Shimura admet une filtration dont certains graduÈs sont 
de torsion et la partie la plus difficile de la preuve consiste, proposition \ref{prop-torsion-princ}
\begin{itemize}
\item ‡ montrer que la cohomologie elle-mÍme admet de la torsion non triviale 

\item et que celle-ci subsiste en diminuant lÈgËrement le niveau en $v$.
\end{itemize}
%Pour montrer ce dernier point, il y a une situation dÈfavorable o˘ seule la cohomologie
%des faisceaux pervers d'Harris-Taylor sur la strate ouverte de Newton aurait de la torsion non triviale
%auquel cas on ne pourrait pas appliquer le lemme \ref{lem-torsion2} et o˘ on ne pourrait pas, ‡ priori,
%faire diminuer le niveau en $v$. Les diffÈrentes hypothËses du point 4) du thÈorËme permettent
%justement d'Èviter ce cas dÈfavorable.
On utilise alors le rÈsultat principal de \cite{boyer-mrl} qui permet de relever une telle classe de torsion
en caractÈristique $0$ quitte ‡ augmenter le niveau en une place auxiliaire.

Une question naturelle est d'itÈrer ce rÈsultat afin de construire un niveau $I'=I^{v,w}I'_v I'_w$
avec $I'_w \subset I_w$ et $I'_v$ un sous-groupe parahorique associÈ ‡ une partition
$\underline m$,
de sorte qu'il existe $\widetilde{\mathfrak m}$ avec 
$$\underline{d_{\widetilde{\mathfrak m},v}}=\underline{d_{\mathfrak m,v}}=\underline m$$
et $\Pi \in \Pi_{\widetilde{\mathfrak m}}$ ayant des vecteurs non nuls invariants sous $I'$.
C'est clairement le cas si  $\underline{d_{\mathfrak m,v}}$ n'admet pas plus d'une ligne de longueur 
$1$ puisqu'alors la condition 4-ii) est toujours vÈrifiÈe. Plus gÈnÈralement on a l'ÈnoncÈ suivant.

\begin{coro} \label{coro-principal}
Supposons que l'ensemble des Ètiquettes des lignes de longueur $1$ du diagramme de Ferrers
ÈtiquetÈ associÈ ‡ $\underline{d_{\mathfrak m,v}}$, ne contient aucun sous-ensemble de la forme 
$\{ \alpha, q \alpha \}$.
Alors pour toute place $w \in \spl(I)$ distincte de $v$, il existe un sous-groupe parahorique $I'_v$ (resp. $I'_w$) associÈe ‡ la partition $\underline{d_{\mathfrak m,v}}^*$ 
(resp. $\underline m_w'$) ainsi qu'un idÈal maximal $\xi$-cohomologique
$\mathfrak m'$ de $\Tm_{I'}$, o˘ $I':=I'_v I'_w I^{v,w}$, tel que 
$$\overline \rho_{\mathfrak m'} \simeq \overline \rho_{\mathfrak m},$$
autrement dit $\overline \rho_{\mathfrak m}$ provient d'une reprÈsentation automorphe de niveau $I'$.
\end{coro}

%
%\rem il est dÈlicat d'itÈrer ce rÈsultat pour arriver ‡ la partition $m_1=d$,
%quand bien mÍme on aurait $\overline N_{\mathfrak m,v}=0$
%car la condition 4-ii) n'est rapidement plus vÈrifiÈe. Par exemple pour $d=3$ si $\overline \rho_{\mathfrak m}$
%est de niveau $I=I^vI_v$ avec $I_v$ l'Iwahori gÈnÈralisÈ associÈ ‡ la partition $3=1+1+1$ alors 
%nÈcessairement $\sharp V_{\mathfrak m}(v) \cap q V_{\mathfrak m}(v) \geq 2$ ce qui ne permet plus 
%d'appliquer le thÈorËme pour l'Iwahori gÈnÈralisÈ associÈ ‡ la partition $3=2+1$.
%

\rem On notera que dans le cas $d=2$ si l'hypothËse 4-ii) n'est pas vÈrifiÈ alors il n'y a rien ‡ dÈmontrer
puisqu'il existe alors un $\Pi_{\widetilde{\mathfrak m}}$ non ramifiÈ en $v$.

Au \S \ref{para-dege}, on donnera par ailleurs un ÈnoncÈ de non dÈgÈnÈrescence de la monodromie, i.e. des conditions
explicites sur $\mathfrak m$ et $\widetilde{\mathfrak m}$, pour que 
$\underline{d_{\widetilde{\mathfrak m},v}}= \underline{d_{ \mathfrak m,v}}$, cf. le corollaire \ref{coro-appli}.

\section{Cohomologie des variÈtÈs de Kottwitz-Harris-Taylor}

\subsection{Rappels sur la gÈomÈtrie}
\label{para-KHT}

On reprend les notations du \S \ref{para-nota} o˘ $G$ est un groupe de similitudes sur $\Qm$ et $p=uu^c$ un nombre premier dÈcomposÈ
dans $E$ avec une place notÈe $v$ de $F$ au dessus de $u$ telle que $(B_v^{op})^\times \simeq GL_d(F_v)$.
On note alors 
$$(X_I)_{I \in \IC}$$ 
le systËme projectif des variÈtÈs de Shimura associÈ au groupe $G$ au dessus de $\spec \OC_v$ tel qu'il
est introduit dans \cite{h-t}: ces variÈtÈs de Shimura sont dites de Kottwitz-Harris-Taylor.
Ce systËme projectif est muni d'une action de $G(\Am^\oo) \times \Zm$  telle que l'action 
d'un ÈlÈment $w_v$ du groupe de Weil $W_v$ de $F_v$ est donnÈe par celle de $-\deg (w_v) \in \Zm$,
avec $\deg=\val \circ \art^{-1}$ o˘ $\art^{-1}:W_v^{ab} \simeq F_v^\times$ est
l'isomorphisme d'Artin qui envoie les Frobenius gÈomÈtriques sur les uniformisantes.

%
%\'Etant donnÈe une reprÈsentation algÈbrique irrÈductible $\xi$ de $G(\Qm)$,
%on note, cf. le \S \ref{para-xi}, $V_{\xi,\overline \Zm_l}$ un rÈseau stable du systËme local associÈ ‡ $\xi$ sur

\begin{nota}
On note: $X_{I,s_v}$ (resp. $X_{I,\eta_v}$) la fibre spÈciale (resp. gÈnÈrique)
de $X_I$ en $v$ et $X_{I,\bar s_v}:=X_{I,s_v} \times \spec \overline \Fm_p$ la fibre spÈciale gÈomÈtrique
(resp. $X_{I,\bar \eta_v}$ la fibre gÈnÈrique gÈomÈtrique).
\end{nota}

%
%\item Pour tout $1 \leq h \leq d$, $X_{I,\bar s_v}^{\geq h}$ (resp. $X_{I,\bar s_v}^{=h}$)
%dÈsigne la strate fermÈe (resp. ouverte) de Newton de hauteur $h$, i.e. le sous-schÈma dont 
%la partie connexe du groupe de Barsotti-Tate en chacun de ses points gÈomÈtriques
%est de rang $\geq h$ (resp. Ègal ‡ $h$).
%\end{itemize}
%Pour tout $1 \leq h <d$, nous utiliserons les notations suivantes:
%$$i_{h+1}:X^{\geq h+1}_{I,\bar s_v} \hookrightarrow X^{\geq h}_{I,\bar s_v}, \quad
%j^{\geq h}: X^{=h}_{I,\bar s_v} \hookrightarrow X^{\geq h}_{I,\bar s_v}.$$
%\end{notas}
%
%
%\begin{nota} \label{nota-Kv}
%Pour tout $I \in \IC$ et $p$ une place
%
%
% on notera $I_w$ sa composante en une place  le sous-ensemble de $\IC$ dont les ÈlÈments sont de 
%la forme $U^vK_v$ o˘ $K_v$ est un sous-groupe d'Iwahori gÈnÈralisÈ de $GL_d(\OC_v)$ i.e. tel que la 
%rÈduction modulo $\varpi_v$ de $K_v$ est un sous-groupe parabolique standard.
%Pour un tel $I \in \IC w(v)$, on notera $I^v:=U^v(m) \in \IC$.
%\end{nota}

Pour $I \in \IC$ tel que, cf. la dÈfinition \ref{defi-Kv}, sa composante 
$I_v$ ‡ la place $v$, est le sous-groupe parahorique standard associÈ ‡ la partition
$(m_1 \geq \cdots \geq m_r)$ de $d$. Alors le morphisme $X_{I} \longrightarrow X_{I^v}$ est le 
problËme de modules correspondant ‡ la donnÈe d'une chaÓne d'isogÈnies
$$\GC_A=\GC_0 \rightarrow \GC_1 \rightarrow \cdots \rightarrow \GC_r=\GC_A/\GC_A[\varpi_v]$$
de $\OC_v$-modules de Barsotti-Tate o˘:
\begin{itemize}
\item pour tout $i=1,\cdots, r$, l'isogÈnie
$\GC_{i-1} \longrightarrow \GC_i$ est de degrÈ $q^{m_i}$ et 

\item la composÈe de ces $r$ isogÈnies est Ègale ‡ l'application canonique 
$\GC_A \longrightarrow \GC_A/\GC_A[\varpi_v]$,
\end{itemize}
o˘ $\GC_A$ est le module de Barsotti-Tate associÈ ‡ la variÈtÈ abÈlienne universelle
sur $X_{I^v}$.

\begin{nota} \label{nota-iwahori}
Pour tout $1 \leq i \leq r$,
on note $Y_{I,i}$ le sous-schÈma fermÈ de $X_{I,\bar s_v}$ sur lequel $\GC_{i-1}
\longrightarrow \GC_i$ a un noyau connexe. Pour tout $S \subset \{ 1,\cdots, r \}$ non vide, on note
$$Y_{I,S}=\bigcap_{i \in S} Y_{I,i}, \qquad Y^0_{I,S}=Y_{I,S}-\bigcup_{S \subsetneq T} Y_{I,T}.$$
Pour tout $1 \leq m \leq r$, soit
$$Y_I^{(r)}:=\coprod_{\sharp S=r} Y_{I,S} \quad \hbox{et} \quad a_r:Y_I^{(r)} \longrightarrow Y_I$$
la projection.
\end{nota}

De la thÈorie du modËle local de Rapoport-Zink, on dÈduit la description suivante, cf. par exemple
\cite{y-t}.

\begin{prop} \label{prop-semi-stable}
Le schÈma $X_I$ est de pure dimension $d$ et a rÈduction semi-stable
sur $\OC_v$, i.e. pour tout point fermÈ $x$ de $X_{I,\bar s_v}$, il existe un voisinage
Ètale $V \longrightarrow X_I$ de $x$ et un $\OC_v$-morphisme Ètale
$$V \longrightarrow \spec \OC_v[T_1,\cdots,T_d]/(T_1 \cdots T_m-\varpi_v)$$
pour $1 \leq m \leq d$. Le schÈma $X_I$ est rÈgulier et le morphisme de restriction
du niveau $X_I \longrightarrow X_{I^v}$ est fini et plat.
Tous les $Y_{I,S}$ sont lisses sur $\spec \kappa(w)$ de pure dimension $d-\sharp S$ avec
$$X_{I,\bar s_v}=\bigcup_{i=1}^r Y_{I,i}$$
o˘ pour $i \neq j$, les schÈmas $Y_{I,i}$ et $Y_{I,j}$ n'ont pas de composante
connexe en commun. 
%
%\item Pour tout $1 \leq h \leq r$, on a
%$$X^{\geq h}_{I,\bar s_v}=\bigcup_{\sharp S \geq h} Y_{I,S}, \qquad
%X^{=h}_{I,\bar s_v}=\bigcup_{\sharp S=h} Y^0_{I,S}.$$
%\end{itemize}
\end{prop}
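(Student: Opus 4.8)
The plan is to deduce the whole statement from the Rapoport--Zink theory of local models, as the sentence preceding the proposition announces. First I would set up the local model diagram. Since $B_v\simeq M_d(F_v)$, Morita equivalence turns the parahoric level datum at $v$ into the datum, over $X_{I^v}$, of a chain of isogenies of $\OC_v$-modules of Barsotti--Tate $\GC_0\to\GC_1\to\cdots\to\GC_r$ of degrees $q_v^{m_1},\ldots,q_v^{m_r}$ whose composite is the canonical map $\GC_0\to\GC_0/\GC_0[\varpi_v]$; Grothendieck--Messing deformation theory then produces a diagram $X_I\xleftarrow{\pi}\widetilde X_I\xrightarrow{\phi}M^{\mathrm{loc}}_I$ in which $\pi$ is a torsor under a smooth group scheme and $\phi$ is smooth, so that $X_I$ and $M^{\mathrm{loc}}_I$ are etale-locally isomorphic. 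Hence regularity, semi-stability, the dimension, the flatness of $X_I\to X_{I^v}$ and the whole stratification of the special fibre can be read off from $M^{\mathrm{loc}}_I$, the local model attached to $GL_d$ over $F_v$, to the minuscule cocharacter $\mu=(1,0,\ldots,0)$ imposed by the signature $(1,d-1)$, and to the standard periodic chain of $\OC_v$-lattices $\Lambda_0\supset\Lambda_1\supset\cdots\supset\Lambda_r=\varpi_v\Lambda_0$ with $\Lambda_{i-1}/\Lambda_i$ of length $m_i$.

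The core step is then the analysis of $M^{\mathrm{loc}}_I$. It is the closed subscheme of a product of projective spaces indexed by the lattices of the chain, parametrising compatible systems of hyperplanes $\omega_i\subset\Lambda_{i-1}\otimes R$, compatibility being with respect to the transition maps $\Lambda_{i-1}\to\Lambda_i$ and $\Lambda_{i-1}\xrightarrow{\varpi_v}\Lambda_i$. This is the ``Drinfeld case'' ($GL_d$, cocharacter $\omega_1$, arbitrary parahoric level): by the explicit computation of Deligne--Rapoport for $d=2$, of Drinfeld and de Jong in general, and by the direct treatment of the balanced level structures on Kottwitz--Harris--Taylor varieties in \cite{h-t}, $M^{\mathrm{loc}}_I$ is flat over $\OC_v$ of relative dimension $d-1$, regular, with reduced special fibre, and semi-stable. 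Concretely I would pick, at a geometric point $x$ of the special fibre, a basis of each $\Lambda_{i-1}\otimes\bar\Fm_p$ adapted to the flag determined by $x$; in these coordinates the linkage and periodicity conditions collapse to a single equation $T_1\cdots T_m-\varpi_v$ in $d$ variables, where $m$ is the number of indices $i$ for which $x$ lies on the ``wall'' $\{T_i=0\}$, i.e. on the corresponding Kottwitz--Rapoport stratum.

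Granting this, the remaining points are essentially formal. By etale descent $X_I$ is regular, pure of dimension $d$ and has the stated etale-local shape $\spec\OC_v[T_1,\ldots,T_n]/(T_1\cdots T_m-\varpi_v)$; the morphism $X_I\to X_{I^v}$ is proper with finite fibres (flags of type $(m_1,\ldots,m_r)$ in $\GC_0[\varpi_v]$), hence finite, and it is flat by miracle flatness since $X_I$ is Cohen--Macaulay, $X_{I^v}$ is smooth over $\OC_v$ and the fibres are zero-dimensional. The special fibre $X_{I,\bar s_v}=\{\varpi_v=0\}$ is a normal crossings divisor whose local branches $\{T_i=0\}$ are identified with the closed subschemes $Y_{I,i}$ of \ref{nota-iwahori} --- this is where the moduli interpretation re-enters, $Y_{I,i}$ being the closed locus where the kernel of $\GC_{i-1}\to\GC_i$ is connected; each $Y_{I,i}$ is a union of irreducible components, two distinct ones share none because they meet only along the deeper strata, and one gets $X_{I,\bar s_v}=\bigcup_{i=1}^r Y_{I,i}$. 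The multi-intersections $Y_{I,S}=\bigcap_{i\in S}Y_{I,i}$ are the closed strata $\{T_i=0:i\in S\}$ of the semi-stable scheme, hence smooth over $\kappa(v)$ of pure dimension $(d-1)-(\sharp S-1)=d-\sharp S$, with $Y^0_{I,S}$ their open parts; and the identities $X^{\ge h}_{I,\bar s_v}=\bigcup_{\sharp S\ge h}Y_{I,S}$ and $X^{=h}_{I,\bar s_v}=\bigcup_{\sharp S=h}Y^0_{I,S}$ for $1\le h\le r$ follow once one identifies, through the Dieudonne module, the Newton stratum of height $\ge h$ with the multiplicity-$\ge h$ locus of this normal crossings divisor.

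The hard part is the explicit local computation of the second paragraph --- the semi-stability of the Drinfeld local model with parahoric level and the extraction of the exact local equation $T_1\cdots T_m-\varpi_v$ --- together with, for the last assertion, the comparison of the Kottwitz--Rapoport and Newton stratifications of the special fibre.
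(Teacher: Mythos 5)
Your proposal is correct and follows essentially the same route as the paper, which states this proposition without detailed proof, deducing it directly from Rapoport--Zink local model theory (the facts being established in the literature on the ``Drinfeld case'' local model and in \cite{h-t}); your sketch simply spells out that deduction: local model diagram, semi-stability of the $GL_d$, $\mu=(1,0,\ldots,0)$ parahoric local model, étale descent, miracle flatness, and the identification of the $Y_{I,S}$ with the Kottwitz--Rapoport strata and of the Newton strata with the multiplicity loci.
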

%
%\begin{nota}
%Pour $T_1,\cdots,T_r \subset \{ 1,\cdots,d \}$ contenant $S$, on notera
%$$j_{S \subset \bigcup_{i=1}^r T_i}: Y_{I,S}- \bigcup_{i=1}^r Y_{I,T_i}
%\hookrightarrow Y_{I,S}.$$
%Dans le cas o˘ les $T_i$ dÈcrivent tous les $T$ contenant $S$, on Ècrira plus simplement
%$$j_S:Y^0_{I,S} \hookrightarrow Y_{I,S}.$$
%\end{nota}

La fibre spÈciale gÈomÈtrique $X_{I,\bar s_v}$, quel que soit le niveau $I$, admet une stratification
dite de Newton: pour tout $1 \leq h \leq d$, on note 
$$X_{I,\bar s_v}^{\geq h}, \quad \hbox{resp. } X_{I,\bar s_v}^{=h},$$
la strate fermÈe (resp. ouverte) de Newton de hauteur $h$, i.e. le sous-schÈma dont la
partie connexe du groupe de Barsotti-Tate en chacun de ses points gÈomÈtriques
est de rang $\geq h$ (resp. Ègal ‡ $h$).

\begin{nota}
On note
$$j^{\geq h}:X^{=h}_{I,\bar s_v} \hookrightarrow X^{\geq h}_{I,\bar s_v}, \qquad
i^h:X^{\geq h}_{I,\bar s_v} \hookrightarrow X_{I,\bar s_v}$$
et $j^{=h}=i^h \circ j^{\geq h}$.
\end{nota}

Lorsque le niveau $I_v$ en $v$ de $I$ est de la forme $\ker \bigl ( GL_d(\OC_v) \twoheadrightarrow
GL_d(\OC_v/\varpi_v^{m_1}) \bigr )$, pour tout $1 \leq h < d$, 
la strate de Newton $X_{I,\bar s_v}^{=h}$ est alors gÈomÈtriquement induite au sens o˘
il existe un sous-schÈma fermÈ $X_{I,\bar s_v,1_h}^{=h}$ muni d'une action de $P_{h,d-h}(\OC_v/\PC_v^{m_1})$ tel que:
$$X_{I,\bar s_v}^{=h} \simeq X_{I,\bar s_v,1_h}^{=h} 
\times_{P_{h,d-h}(\OC_v/\PC_v^{m_1})} GL_d(\OC_v/\PC_v^{m_1}).$$

\subsection{SystËmes locaux d'Harris-Taylor}

Passons provisoirement en niveau infini en $v$ et notons $X_{Iv^\oo}$ la tour associÈe: ‡ l'aide des variÈtÈs d'Igusa de 
premiËre et seconde espËce, les auteurs de \cite{h-t} p136, associent
‡ toute reprÈsentation admissible $\rho_v$  des inversibles de l'ordre maximal 
$\DC_{v,h}$ de l'algËbre ‡ division centrale $D_{v,h}$ sur $F_v$ d'invariant $1/h$,
un systËme local $\LC_{1_h}(\rho_v)$ sur $X^{=h}_{Iv^\oo,\bar s_v,1_h}$ muni 
d'une action de $P_{h,d-h}(\OC_v)$ agissant via
la projection $P_{h,d-h}(\OC_v) \longrightarrow \Zm \times GL_{d-h}(\OC_v)$. On note alors
$$\LC(\rho_v):=\LC_{1_h}(\rho_v) \times_{P_{h,d-h}(\OC_v)} GL_d(\OC_v)$$
sa version induite sur $X^{=h}_{Iv^\oo,\bar s_v}$.

\begin{nota}
Soient $\pi_v$ une reprÈsentation irrÈductible cuspidale de $GL_g(F_v)$ et pour 
$1 \leq t \leq \frac{d}{g}$, on note $\pi_v[t]_D$ la reprÈsentation de $D_{tg,v}^\times$
associÈ ‡ la reprÈsentation de Steinberg $\st_t(\pi_v)$ par la correspondance locale de
Jacquet-Langlands. La reprÈsentation $\pi_v[t]_D$ de $D_{v,tg}^\times$ fournit alors
un systËme local sur $X^{=tg}_{Iv^\oo,\bar s_v,1_{tg}}$
$$\LC(\pi_v[t]_D)_{1_{tg}}=\bigoplus_{i=1}^{e_{\pi_v}} 
\LC_{\overline \Qm_l}(\rho_{v,i})_{1_{tg}}$$
o˘ $(\pi_v[t]_D)_{|\DC_{v,h}^\times}=\bigoplus_{i=1}^{e_{\pi_v}} \rho_{v,i}$ avec $\rho_{v,i}$ irrÈductible
et muni d'une action de $P_{tg,d-tg}(F_v)$ via son quotient $GL_{d-tg} \times \Zm$.
\end{nota}

\begin{defi} \label{defi-HT}
Les systËmes locaux d'Harris-Taylor sont alors les
$$\widetilde{HT}_{1_{tg}}(\pi_v,\Pi_t):=\LC(\pi_v[t]_D)_{1_{tg}} \otimes \Pi_t \otimes \Xi^{\frac{tg-d}{2}}$$
o˘ $\Pi_t$ est une reprÈsentation quelconque de $GL_{tg}(F_v)$. 
La version induite est notÈe
$$\widetilde{HT}(\pi_v,\Pi_t):=\Bigl ( \LC(\pi_v[t]_D)_{1_{tg}} 
\otimes \Pi_t \otimes \Xi^{\frac{tg-d}{2}} \Bigr) \times_{P_{tg,d-tg}(F_v)} GL_d(F_v),$$
o˘ l'action du radical unipotent de $P_{tg,d-tg}(F_v)$ est triviale, et celle de
$$(g^{\oo,v},\left ( \begin{array}{cc} g_v^c & *†\\ 0 & g_v^{et} \end{array} \right ),\sigma_v) 
\in G(\Am^{\oo,v}) \times P_{tg,d-tg}(F_v) \times W_v$$ 
est donnÈe
\begin{itemize}
\item par celle de  $g_v^c$ sur $\Pi_t$ et  $\deg(\sigma_v) \in \Zm$ sur $ \Xi^{\frac{tg-d}{2}}$ ainsi que 

\item celle de $(g^{\oo,v},g_v^{et},\val(\det g_v^c)-\deg \sigma_v)
\in G(\Am^{\oo,v}) \times GL_{d-tg}(F_v) \times \Zm$ sur $\LC_{\overline \Qm_l}
(\pi_v[t]_D)_{1_{tg}} \otimes \Xi^{\frac{tg-d}{2}}$,
o˘ $\Xi:\frac{1}{2} \Zm \longrightarrow 
\overline \Zm_l^\times$ est dÈfini par $\Xi(\frac{1}{2})=q^{1/2}$.
\end{itemize}
On dit de l'action de $GL_{tg}(F_v)$ qu'elle est \emph{infinitÈsimale}.
\end{defi}

\rem En particulier le facteur $\Pi_t$ n'intervient pas rÈellement ni dans le calcul des faisceaux de cohomologie, par exemple
des extensions intermÈdiaires de la notation suivante, ni dans celui des groupes de cohomologie, par exemple de
$H^i_c(X_{Iv^\oo,\bar s_v,1_{tg}},\widetilde HT_{1_{tg}}(\pi_v,\Pi_t))$. PrÈcisÈment, 
d'aprËs la description des actions rappelÈe ci-avant,
en tant $\Tm_I \times GL_{tg}(F_v) \times GL_{d-tg}(F_v) \times \Zm$-module, un tel groupe de cohomologie 
s'Ècrit comme une extension de modules irrÈductibles de la forme 
$M \otimes \big (\Pi_t \otimes \chi \circ \val \circ \det \big ) \otimes \Pi_v \otimes \chi^{-1}$, o˘ $M$ (resp. $\Pi_v$)
est un $\Tm_I$-module (resp. une reprÈsentation de $GL_{d-tg}(F_v)$) irrÈductible, et $\chi:\Zm \rightarrow \overline \Qm_l^\times$.

\begin{notas} On pose
$$HT(\pi_v,\Pi_t):=\widetilde{HT}(\pi_v,\Pi_t)[d-tg],$$
et le faisceau pervers d'Harris-Taylor associÈ est
$$P(t,\pi_v):= \lexp p j^{=tg}_{!*} HT(\pi_v,\st_t(\pi_v)) \otimes \Lm(\pi_v),$$
o˘ $\Lm^\vee$ dÈsigne la correspondance locale de Langlands.
\end{notas}

D'aprËs cf. \cite{boyer-invent2} proposition 4.3.1 complÈtÈe par le corollaire 5.4.1, on a l'ÈgalitÈ
suivante dans le groupe de Grothendieck des faisceaux pervers Èquivariants
\begin{multline} \label{eq-egalite}
j^{=tg}_! HT(\pi_v,\Pi_t)=\lexp p j^{=tg}_{!*} HT(\pi_v,\Pi_t) \\
+\sum_{k=1}^{\lfloor \frac{d}{g} \rfloor -t}
\lexp p j^{\geq (t+k)}_{!*}
HT(\pi_v,\Pi_{t} \{ \frac{-k}{2} \} \times \st_k(\pi_v\{\frac{t}{2}\} )) (k/2)
\end{multline}

On rappelle que $\pi'_v$ est inertiellement Èquivalente ‡ $\pi_v$ si et seulement
s'il existe un caractËre $\zeta: \Zm \longrightarrow  \overline \Qm_l^\times$ tel que 
$\pi'_v \simeq \pi_v \otimes (\zeta \circ \val \circ \det)$.
Les faisceaux pervers $P(t,\pi_v)$ ne dÈpendent que de la classe d'Èquivalence inertielle de $\pi_v$
et sont de la forme 
$$P(t,\pi_v)=e_{\pi_v} \PC(t,\pi_v)$$ 
o˘ $\PC(t,\pi_v)$ est un faisceau pervers irrÈductible.

\begin{nota} Pour $I \in \IC$, on notera $\PC_I(t,\pi_v):=\PC(t,\pi_v)^{I_v}$ le faisceau pervers
d'Harris-Taylor sur $X_{I,\bar s_v}$ et on ajoutera plus gÈnÈralement un indice $I$ pour
les $HT(\pi_v,\Pi_t)$ lorsqu'on les considËre ‡ niveau fini $I$.
\end{nota}

\rem Lorsque $I_v$ est un sous-groupe parahorique, le faisceau pervers $P_I(t,\pi_v)$
est nul si $\pi_v$ n'est pas un caractËre.

Le rÈsultat principal de \cite{boyer-invent2} sur les faisceaux de cohomologies des faisceaux 
pervers d'Harris-Taylor, dont on pourra trouver une preuve simplifiÈe dans \cite{boyer-FT} 
peut s'Ècrire comme suit sous la forme d'une rÈsolution o˘ on a posÈ $s=\lfloor \frac{d}{g} \rfloor$:
\addtocounter{smfthm}{1}
\begin{multline} \label{eq-resolution0} 
0 \rightarrow j_!^{=sg} HT_{1_{tg}}(\pi_v,\Pi_t \{ \frac{t-s}{2} \} \otimes \speh_{s-t}(\pi_v\{\frac{t}{2} \})) 
\otimes \Xi^{\frac{s-t}{2}} \longrightarrow \cdots \\
%\longrightarrow j^{=t+2}_! HT_{1_h}(\chi_v,\Pi_t†\{-1 \} \otimes \speh_2(\chi_v\{ \frac{t}{2} \})) 
%\otimes \Xi^1 \\
\cdots \longrightarrow j_!^{=t+1} HT_{1_{tg}}(\pi_v,\Pi_t\{-1/2 \} \otimes \pi_v\{ \frac{t}{2} \}) \otimes 
\Xi^{\frac{1}{2}}  \\
\longrightarrow j_!^{=t} HT_{1_{tg}}(\pi_v,\Pi_t) \longrightarrow \lexp p j_{!*}^{=t} 
HT_{1_{tg}}(\pi_v,\Pi_t) \rightarrow 0,
\end{multline}
o˘ pour tout $tg \leq h \leq d$, $\Pi_t$ (resp. $\Pi_{h-tg}$) une reprÈsentation de $GL_{tg}(F_v)$
(resp. de $GL_{h-tg}(F_v)$), on a notÈ
$$HT_{1_{tg}}(\pi_v,\Pi_t \otimes \Pi_{h-tg}):=HT_{1_h}(\pi_v,\Pi_t \otimes \Pi_{h-tg}) 
\times_{P_{tg,h-tg,d-h}(F_v)} P_{tg,d-tg}(F_v).$$

%\rem On a un rÈsultat similaire en remplaÁant $\chi_v$ par une cuspidale $\pi_v$ quelconque. On voit ‡ nouveau le rÙle \og dÈcoratif \fg{} de $\Pi_t$.

Pour $\chi_v$ une reprÈsentation cuspidale de $GL_1(F_v)$, i.e. un caractËre de $F_v^\times$,
le $\overline \Zm_l$-systËme local $\LC_{1_h}(\chi_v)$ est isomorphe ‡ $\overline \Zm_l$ muni
de l'action du groupe fondamental $\Pi_1(X^{=h}_{I,\bar s_v})$ de $X^{=h}_{I,\bar s_v}$
qui se factorise par son quotient $\Pi_1(X^{=h}_{I,\bar s_v}) \twoheadrightarrow \DC_{v,h}^\times$
o˘ l'action de $\DC_{v,h}^\times$
est donnÈe par le caractËre $\chi_v$. En remarquant, cf. \cite{boyer-duke} lemme 3.0.2, 
que l'adhÈrence $X^{\geq h}_{I,\bar s_v}$ de $X^{=h}_{I,\bar s_v}$ est lisse, on en dÈduit que
$\overline \Zm_l [d-h]$ est un faisceau pervers sur $X^{\geq h}_{I,\bar s_v}$ qui s'identifie,
avec l'action de $\Pi_1(X_{I,\bar s_v}^{=h})$ comme ci-avant, alors
aux deux extensions intermÈdaires 
\addtocounter{smfthm}{1}
\begin{equation} \label{eq-p+p}
\lexp p j^{\geq h}_{!*} \LC_{1_h}(\chi_v)[d-h] \simeq 
\lexp {p+} j^{\geq h}_{!*} \LC_{1_h}(\chi_v)[d-h].
\end{equation}

\rem D'aprËs le rÈsultat principal de \cite{boyer-duke}, la rÈsolution prÈcÈdente 
(\ref{eq-resolution0}) est encore valide sur $\overline \Zm_l$. En revanche
l'isomorphisme (\ref{eq-p+p}) n'est valable que lorsque la rÈduction modulo $l$ de $\pi_v$
est encore supercuspidale et pas seulement cuspidale. Dans ce texte nous ne considÈrerons
que le cas $g=1$, i.e. les $\pi_v$ qui sont des caractËres $\chi_v$.

\begin{nota} \label{nota-uspilon}
On notera $\Upsilon$ l'ensemble des classes d'Èquivalence inertielle des caractËres de $F_v^\times$.
\end{nota}

\subsection{RelËvement des classes de cohomologie de torsion}
\label{para-xi}

Fixons un plongement $\sigma_0:E \hookrightarrow
\overline{\Qm}_l$ et notons $\Phi$ l'ensemble des plongements $\sigma:F \hookrightarrow
\overline \Qm_l$ dont la restriction ‡ $E$ est $\sigma_0$. 
On rappelle alors qu'il existe une bijection explicite entre les reprÈsentations algÈbriques irrÈductibles 
$\xi$ de $G$ sur $\overline \Qm_l$ et les $(\sharp \Phi+1)$-uplets
$$\bigl ( a_0, (\overrightarrow{a_\sigma})_{\sigma \in \Phi} \bigr )$$
o˘ $a_0 \in \Zm$ et pour tout $\sigma \in \Phi$, on a $\overrightarrow{a_\sigma}=
(a_{\sigma,1} \leq \cdots \leq a_{\sigma,d} )$.
Il existe alors une extension finie $K$ de $\Qm_l$ telle que 
la reprÈsentation $\iota^{-1} \circ \xi$ de plus haut poids
$\bigl ( a_0, (\overrightarrow{a_\sigma})_{\sigma \in \Phi} \bigr )$,
est dÈfinie sur $K$. On note $W_{\xi,K}$ l'espace de cette reprÈsentation et $W_{\xi,\OC}$
un rÈseau stable sous l'action du sous-groupe compact maximal $G(\Zm_l)$ et 
o˘ $\OC$ dÈsigne l'anneau des entiers de $K$.

\rem Si on suppose que $\xi$ est $l$-petit, i.e. que pour tout $\sigma \in \Phi$ 
$a_{\sigma,d}-a_{\sigma,1} < l$,
alors un tel rÈseau stable est unique ‡ homothÈtie prËs.

Notons $\lambda$ une uniformisante de $\OC$ et soit
pour $n \geq 1$, un sous-groupe distinguÈ $I_n \in \IC$ de $I \in \IC$,
compact ouvert agissant trivialement sur $W_{\xi,\OC/\lambda^n}:=W_{\xi,\OC} 
\otimes_{\OC} \OC/\lambda^n$. On note alors $V_{\xi,\OC/\lambda^n}$ le faisceau sur
$X_{I}$ dont les sections sur un ouvert Ètale $T \longrightarrow X_{I}$ sont les fonctions
$$f:\pi_0 \big ( X_{I_n} \times_{X_I} T \bigr ) \longrightarrow W_{\xi,\OC/\lambda^n}$$
telles que pour tout $k \in I$ et $C \in \pi_0 \big ( X_{I_n} \times_{X_I} T \bigr )$, on a
la relation $f(Ck)=k^{-1} f(C)$.

\begin{notas} On note
$$V_{\xi,\OC}=\lim_{\atop{\leftarrow}{n}} V_{\xi,\OC/\lambda^n} \hbox{ et }
V_{\xi,K}=V_{\xi,\OC} \otimes_{\OC} K.$$
On utilisera aussi la notation $V_{\xi,\overline \Zm_l}$ et $V_{\xi,\overline \Qm_l}$ pour les versions
sur $\overline \Zm_l$ et $\overline \Qm_l$ respectivement ainsi que 
$$HT_\xi (\pi_v,\Pi_t):=HT(\pi_v,\Pi_t) \otimes V_{_xi,\overline \Qm_l}.$$
\end{notas}

\rem % \label{defi-reg}
La reprÈsentation $\xi$ est dite \emph{rÈguliËre} si son paramËtre
$\bigl ( a_0, (\overrightarrow{a_\sigma})_{\sigma \in \Phi} \bigr )$ est tel que pour 
tout $\sigma \in \Phi$, on a $a_{\sigma,1} < \cdots < a_{\sigma,d}$. 

On rappelle le rÈsultat principal de \cite{boyer-mrl} qui permet de relever en caractÈristique nulle les classes de torsion.

\begin{theo} (cf. \cite{boyer-mrl} corollaire 2.9)
Soit $i$ tel que le sous-module de torsion de 
$H^i(X_{I,\bar \eta},V_{\xi,\overline \Zm_l})_{\mathfrak m}$ est non nul. Alors pour tout $v \in \spl(I)$,
il existe une reprÈsentation irrÈductible $\xi$-cohomologique $\Pi(v)$ non ramifiÈe en toute
place $w \neq v$ ne divisant pas $I$ et dont les paramËtres de Satake modulo $l$ en $w$
sont donnÈs par $S_{\mathfrak m}(w)$. 
\end{theo}

\rem La composante en $v$ de $\Pi(v)$ est ramifiÈe et d'aprËs loc. cit. en utilisant
le lemme \ref{lem-secherre}, possËde des
vecteurs non nuls invariants sous un certain sous-groupe parahorique associÈ
‡ une partition de la forme $(m \geq 1 \geq 1 \geq \cdots \geq 1)$.

D'aprËs le thÈorËme prÈcÈdent, pour prouver \ref{theo-principal} il suffit alors de montrer, 
cf. la proposition \ref{prop-torsion-princ}, 
qu'il existe un niveau $I'$ de la forme $I'=I^vI'_v$ o˘ $I'_v$ est un sous-groupe parahorique
contenant strictement $I_v$ tel que la localisation en 
$\mathfrak m$ de la cohomologie de $X_{I'}$ ‡ coefficients dans $V_\xi$ est non nulle et donc,
par maximalitÈ de $\underline m$ nÈcessairement de torsion.

\section{Preuve du thÈorËme principal}

\subsection{Suite spectrale de Rapoport-Zink}

On considËre ‡ prÈsent un niveau $I \in \IC$ tel que la composante $I_v$ de $I$ 
‡ la place $v$ est le sous-groupe parahorique standard $\Iw_v(\underline m)$ associÈ ‡ la partition
$\underline m=(m_1 \geq m_2 \geq \cdots \geq m_r)$ de $d$. Avec les notations de \ref{nota-iwahori},
la variÈtÈ de Shimura $X_I$ admet une rÈduction semi-stable ‡ la place $v$ ce qui permet de
reprendre les constructions de Rapoport-Zink, cf. par exemple \cite{ill} \S 3. 

\begin{nota}
On note $R\Psi_{I;v}(\overline \Zm_l)$
le complexe des cycles proches sur $X_{I,\bar s_v}$.
\end{nota}

Rapoport et Zink construisent en particulier un bicomplexe $\AC$ ainsi qu'un isomorphisme de complexes
$$R\Psi_{I,v}(\overline \Zm_l) \simeq s(\AC)$$
o˘ $s(\AC)$ est le complexe simple associÈ ‡ $\AC$ ainsi qu'un morphisme
$$\nu: \AC \longrightarrow \AC[-1,1](-1)$$
qui via l'isomorphisme prÈcÈdent fournit
$$(T-1) \otimes T^\vee: R\Psi_{I,v}(\overline \Zm_l) \longrightarrow R\Psi_{I,v}(\overline \Zm_l)(-1).$$
Le bicomplexe $\AC$ est ensuite muni d'une filtration croissante $W_\bullet \AC$ de sorte que
les graduÈs correspondant $\gr^W_\bullet s(\AC)$ sont les $\gr^W_\bullet s(\BC)$ o˘
$\BC$ est le bicomplexe ‡ diffÈrentielles nulles, cf. les notations \ref{nota-iwahori}
$$\begin{array}{llcc} 
a_{r,*} \overline \Zm_l \\
a_{r-1,*} \overline \Zm_l & a_{r,*} \overline \Zm_l (-1) \\
\cdots \\
a_{1,*} \overline \Zm_l & a_{2,*} \overline \Zm_l (-1) & \cdots & a_{r,*} \overline \Zm_l (-r+1)
\end{array}$$
o˘ le coin en bas ‡ gauche correspond ‡ $(0,0)$
et o˘ $W_r \BC$ est obtenu en appliquant le foncteur de troncation canonique $\tau_{\leq r+q}$
‡ la $q$-iËme ligne de $\BC$. Ainsi le graduÈ $\gr^W_rR\Psi_{I,v}(\overline \Zm_l)$ a pour faisceaux
de cohomologie
\addtocounter{smfthm}{1}
\begin{equation} \label{eq-hipsi}
\bigl ( \hi^i \gr^W_r R\Psi_{I,v}(\overline \Zm_l) \bigr )_{i \geq 0}=\Bigl ( \overbrace{0, \cdots, 0}^r, 
a_{|r|+1,*} \overline \Zm_l (-|r|), a_{|r|+3,*} \overline \Zm_l (-|r|-1), \cdots \Bigr ).
\end{equation}
Rapoport et Zink montrent en outre que l'opÈrateur $(T-1) \otimes T^\vee$ dÈfini plus haut, induit
un isomorphisme
\addtocounter{smfthm}{1}
\begin{equation} \label{eq-RZ2}
((T-1) \otimes T^\vee)^r: \gr^W_rR\Psi_{I,v}(\overline \Zm_l) \overset{\sim}{\longrightarrow} \gr^W_{-r} R\Psi_{I,v}(\overline \Zm_l)(-r).
\end{equation}
Sur $\overline \Qm_l$, $(T-1) \otimes T^\vee$ est nilpotent ce qui permet de dÈfinir
$$N=\log T \otimes T^\vee:R\Psi_{I,v}(\overline \Qm_l)[d-1] \longrightarrow R\Psi_{I,v}(\overline \Qm_l)[d-1](-1),$$
lequel correspond alors ‡ l'opÈrateur de monodromie usuel.
Ainsi les $\gr^W_\bullet s(\AC) \otimes_{\overline \Zm_l} \overline \Qm_l$ sont les graduÈs
de la filtration de monodromie de $R\Psi_{I,v}(\overline \Qm_l)$.

\rem Les $\gr^W_r R\Psi_{I,v}(\overline \Qm_l)$ sont dÈcrits explicitement en tout niveau 
dans \cite{boyer-invent2}.

Rappelons, cf. \cite{boyer-torsion} \S 1.4, que $\DC:=D^b_c(X_{I,\bar s_v},\overline \Zm_l)$ est muni
de deux structures perverses notÈes $p$ et $p+$
$$\begin{array}{l}
A \in \lexp p \DC^{\leq 0}
\Leftrightarrow \forall x \in X,~\hi^k i_x^* A=0,~\forall k >- \dim \overline{\{ x \} } \\
A \in \lexp p \DC^{\geq 0} \Leftrightarrow \forall x \in X,~\hi^k i_x^! A=0,~\forall k <- \dim \overline{\{ x \} }
\end{array}$$
o˘ $i_x:\spec \kappa(x) \hookrightarrow X_{I,\bar s_v}$, et
$$\begin{array}{l}
A \in \lexp {p+} \DC^{\leq 0} \Leftrightarrow \forall x \in X,
\left \{ \begin{array}{ll} \hi^i i_x^* A=0, & \forall i >- \dim \overline{\{ x \} } +1 \\
\hi^{-\dim \overline{\{ x \} } +1} i_x^* A & \hbox{de torsion} \end{array} \right. \\
A \in \lexp {p+} \DC^{\geq 0} \Leftrightarrow \forall x \in X,
\left \{ \begin{array}{ll} \hi^i i_x^! A=0, & \forall i <- \dim \overline{\{ x \} } \\
\hi^{-\dim \overline{\{ x \} }} i_x^! A & \hbox{libre} \end{array} \right.
\end{array}$$

\begin{nota} On introduit le faisceau pervers
$$\Psi_{I,v,\overline \Zm_l}:=R\Psi_{I,v}(\overline \Zm_l)[d-1](\frac{d-1}{2})$$
qui est autodual et pervers pour les deux $t$-structures $p$ et $p+$.
On notera aussi 
$$\gr^W_r \Psi_{I,v,\overline \Zm_l}:=\gr^W_r R\Psi_{I,v}(\overline \Zm_l)[d-1](\frac{d-1}{2}).$$
\end{nota}

\begin{lemm} Les $\gr^W_r \Psi_{I,v,\overline \Zm_l}$ sont pervers pour les deux $t$-structures $p$ et $p+$.
\end{lemm}

\begin{proof}
De la description donnÈe plus haut de $\gr_r^W s(\BC)$, lequel ‡ un dÈcalage prËs correspond ‡ 
$\gr^W_r \Psi_{\overline \Zm_l}$, et donc de (\ref{eq-hipsi}), on en dÈduit que ce dernier appartient ‡ 
$\lexp p \DC^{\leq 0}(X_{I,\bar s_v},\overline \Zm_l) \subset 
\lexp {p+} \DC^{\leq 0}(X_{I,\bar s_v},\overline \Zm_l)$. De la lissitÈ des $Y_{I,S}$ et donc des
$Y_I^{(r)}$, on obtient de mÍme, aprËs application du foncteur de dualitÈ de Grothendieck-Verdier, que
$\gr^W_r \Psi_{I,v,\overline \Zm_l} \in \lexp {p+} \DC^{\geq 0}(X_{I,\bar s_v},\overline \Zm_l) \subset
\lexp p \DC^{\geq 0}(X_{I,\bar s_v},\overline \Zm_l)$,
d'o˘ le rÈsultat.

\end{proof}

Rappelons que $\gr^W_r \Psi_{I,v,\overline \Qm_l}$ Ètant pur, il est semi-simple et s'Ècrit 
d'aprËs \cite{boyer-invent2} thÈorËme 2.2.4
$$\gr^W_r \Psi_{I,v,\overline \Qm_l}=\bigoplus_{\genfrac{}{}{0pt}{}{1 \leq h \leq d}{h \equiv r+1 \mod 2}}
\bigoplus_{\chi_v \in \Upsilon} \PC_I(h,\chi_v)(\frac{r}{2}),$$
o˘, cf. la notation \ref{nota-uspilon}, $\Upsilon$ dÈsigne l'ensemble des classes d'Èquivalence inertielle des caractËres de $F_v^\times$.

\begin{lemm} \label{lem-decompo-gr}
Sur $\overline \Zm_l$, on a une dÈcomposition
$$\gr^W_r \Psi_{I,v,\overline \Zm_l}=\bigoplus_{\genfrac{}{}{0pt}{}{1 \leq h \leq d}{h \equiv r+1 \mod 2}}
\gr^W_{r,h} \Psi_{I,v,\overline \Zm_l},$$
o˘ $\gr^W_{r,h} \Psi_{I,v,\overline \Zm_l} \otimes_{\overline \Zm_l} \overline \Qm_l \simeq 
\bigoplus_{\chi_v \in \Upsilon} \PC_I(h,\chi_v)(\frac{r}{2}).$
\end{lemm}

\begin{proof}
Le rÈsultat dÈcoule d'aprËs (\ref{eq-p+p}) du fait dÈtaillÈ ci-dessous.
ConsidÈrons une extension
$$0 \rightarrow A_1 \longrightarrow A \longrightarrow A_2 \rightarrow 0$$
o˘ $A_1$ et $A_2$ sont des $p$-extensions intermÈdiaires de systËmes locaux sur respectivement
$X^{=h_1}_{I,\bar s_v}$ et $X^{=h_2}_{I,\bar s_v}$ avec $h_1 > h_2$ et telle que
$A \otimes_{\overline \Zm_l} \overline \Qm_l$ est scindÈe. Soit alors $A'_2$ le tirÈ en arriËre
$$\xymatrix{
A'_2 \ar@{^{(}-->}[r] \ar@{^{(}-->}[d] & A \ar@{^{(}->}[d] \\
A_2\otimes_{\overline \Zm_l} \overline \Qm_l  \ar@{^{(}->}[r] &  A  \otimes_{\overline \Zm_l} \overline \Qm_l 
}$$
de sorte que
$$\xymatrix{
 & A_1 \ar@{^{(}->}[d] \ar@{=}[r] & A_1 \ar@{^{(}->}[d] \\
 A'_2 \ar@{^{(}->}[r] \ar@{=}[d] & A \ar@{->>}[r] \ar@{->>}[d] & A'_1 \ar@{->>}[d]  \\
 A'_2 \ar@{^{(}->}[r] & A_2 \ar@{->>}[r] & T  \\
}
$$
Comme $A_2$ est une $p$-extension intermÈdiaire, si $T$ Ètait non nul, sa restriction ‡ 
$X^{=h_2}_{I,\bar s_v}$ serait non nulle ce qui ne se peut pas puisque cette strate
n'intersecte pas $X^{\geq h_1}_{I,\bar s_v}$. Ainsi donc $A$ est scindÈe.

\end{proof}

On fixe une fois pour toute une ÈnumÈration de $\Upsilon=\{ \chi_{v,1},\chi_{v,2},\cdots \}$ et on
considËre le tirÈ en arriËre
$$\xymatrix{
\PC_{I,\Gamma_r}(\chi_{v,1},h)(\frac{r}{2}) \ar@{^{(}-->}[r] \ar@{^{(}-->}[d] & 
\gr^W_r \Psi_{I,v,\overline \Zm_l} \ar@{^{(}->}[d] \\
\PC_{I}(\chi_{v,1},h)(\frac{r}{2})  \ar@{^{(}->}[r] &  \gr^W_r \Psi_{I,v,\overline \Qm_l},
}$$
Soit alors le quotient $\gr^W_{r,\geq 2} \Psi_{I,v,\overline \Zm_l}:=
\gr^W_r \Psi_{I,v,\overline \Zm_l} / \PC_{I,\Gamma_r}(\chi_{v,1},h)(\frac{r}{2})$.
On procËde alors comme prÈcÈdemment en considÈrant le tirÈ en arriËre
$$\xymatrix{
\PC_{I,\Gamma_r}(\chi_{v,2},h)(\frac{r}{2}) \ar@{^{(}-->}[r] \ar@{^{(}-->}[d] & 
\gr^W_{r,\geq 2} \Psi_{I,v,\overline \Zm_l} \ar@{^{(}->}[d] \\
\PC_{I}(\chi_{v,2},h)(\frac{r}{2})  \ar@{^{(}->}[r] &  \gr^W_{r,\geq 2} \Psi_{I,v,\overline \Qm_l},
}$$
et ainsi de suite de faÁon ‡ obtenir des structures entiËres $\PC_{I,\Gamma_r}(\chi_v,h)(\frac{r}{2})$
pour tout $h \equiv r+1 \mod 2$ et $1 \leq h \leq d$. 

\begin{lemm} Les structures entiËres $\PC_{I,\Gamma_r}(\chi_v,h)(\frac{r}{2})$ ne dÈpendent pas
de $r$ . Par ailleurs on a des isomorphismes
\addtocounter{smfthm}{1}
\begin{equation} \label{eq-RZ2b}
(T-1) \otimes T^\vee: \PC_{I,\Gamma_r}(\chi_v,h) (\frac{r}{2}) \overset{\sim}{\longrightarrow} 
\PC_{I,\Gamma_{r-2}}(\chi_v,h) (\frac{r-2}{2})
\end{equation}
pour tout $2 \leq h \leq d$, $h \equiv r-1 \mod 2$ et $3-h \leq r \leq h-1$.
\end{lemm}

\begin{proof}
D'aprËs (\ref{eq-RZ2}) et la dÈcomposition du lemme \ref{lem-decompo-gr}, 
$(T-1) \otimes T^\vee$ induit des isomorphismes
$$(T-1) \otimes T^\vee: \gr^W_{r,h} \Psi_{I,v,\overline \Zm_l} \overset{\sim}{\longrightarrow}
\gr^W_{r-2,h} \Psi_{I,v,\overline \Zm_l}(-1)$$
pour tout $2 \leq h \leq d$, $h \equiv r-1 \mod 2$ et $3-h \leq r \leq h-1$. Le rÈsultat en dÈcoule alors
puisqu'on utilise, pour tous ces $r$, la mÍme numÈrotation de $\Upsilon$ pour construire les 
structures entiËres $\PC_{I,\Gamma_r}(\chi_{v,i},h)(\frac{r}{2})$.

\end{proof}

\begin{nota} On notera alors plus simplement $\PC_{I,\Gamma}(\chi_v,t)$ la structure entiËre
de $\PC_I(\chi_v,t)$ fournie par $\Psi_{I,v,\overline \Zm_l}$ et le choix de l'ÈnumÈration de $\Upsilon$.
\end{nota}

\rem On ne cherche pas ici ‡ prÈciser de quelle structure entiËre il s'agit. Lorsque le niveau en $v$ 
est grand, on peut montrer que plusieurs telles structures coexistent pour les $\PC_I(\pi_v,t)$ lorsqu'on filtre $\Psi_{I,v,\overline \Zm_l}$.

La suite spectrale dite de Rapoport-Zink associÈe
\addtocounter{smfthm}{1}
\begin{equation} \label{eq-RZ}
E_1^{p,q}=H^{p+q}(X_{I,\bar s_v},\gr^W_{-p} R\Psi_{I,v} (\overline \Zm_l) \otimes V_{\xi,\overline \Zm_l}) 
\Rightarrow H^{p+q}(X_{I,\bar \eta},V_{\xi,\overline \Zm_l}),
\end{equation}
peut alors se raffiner  en utilisant les $\PC_{I,\Gamma}(\chi_v,h)(\frac{r}{2})$, ou comme dans
\cite{ill}, se dÈcrire ‡ l'aide des $Y_{I,S}$:
$$E_1^{p,q}=\bigoplus_{i \geq \max \{ 0, - p \} } \bigoplus_{\sharp S=p+2i+1} H^{q-2i} (Y_{I,S},
V_{\xi,\overline \Zm_l}(-i)).$$

\begin{prop} \label{prop-cohoQ}
Soit $I \in \IC$ avec donc $I_v$ un sous-groupe parahorique.
Soit $\widetilde{\mathfrak m}$ un idÈal premier de $\Tm_I$ tel qu'il existe $r$ et $i \neq 0$
avec $H^i(X_{I,\bar s_v},\gr^W_r \Psi_{I,v,\overline \Qm_l}\otimes V_{\xi,\overline \Qm_l})_{\widetilde{\mathfrak m}} \neq 0$.
Alors la reprÈsentation galoisienne $\rho_{\widetilde{\mathfrak m}}$ associÈe est rÈductible.
\end{prop}

\begin{proof}
Le thÈorËme 2.2.4 de \cite{boyer-invent2} dÈcrit les graduÈs\footnote{On montre dans loc. cit.
que les filtrations de monodromie et de poids de $\Psi_{I,v,\overline \Qm_l}$ coÔncident ‡ un dÈcalage prËs.}
$\gr^W_r \Psi_{I,v,\overline \Qm_l}$ en termes des faisceaux pervers d'Harris-Taylor lesquels
sont indexÈs par les reprÈsentations irrÈductibles cuspidales d'un $GL_g(F_v)$ pour $g$
variant de $1$ ‡ $d$. En niveau parahorique ‡ la place $v$, seules les cuspidales (caractËres)
pour $g=1$ contribuent.

Les groupes de cohomologie des faisceaux pervers d'Harris-Taylor sont explicitÈs au \S 3 de
\cite{boyer-compositio}. 
Pour ce faire on dÈcrit la partie $\Pi^\oo$-isotypique de ces groupes de cohomologie
pour $\Pi$ une reprÈsentation automorphe cohomologique.
On note alors que pour avoir de la cohomologie $\Pi^\oo$-isotypique en dehors du degrÈ mÈdian il
faut que la composante locale $\Pi_v$ en $v$ soit de la forme $\speh_s(\pi_v)$ pour $\pi_v$ une
reprÈsentation tempÈrÈe, auquel cas $\Pi^\oo$ est de la forme $\speh_s(\pi)$ pour $\pi$ cuspidale,
ce qui en termes galoisiens signifie que la reprÈsentation galoisienne associÈe ‡ $\Pi$ par la 
correspondance de Langlands globale s'Ècrit 
$\rho |-|^{\frac{1-s}{2}} \oplus \cdots \oplus \rho |-|^{\frac{s-1}{2}}$ o˘ $\rho$ est la reprÈsentation
galoisienne associÈ ‡ $\pi$ par la correspondance de Langlands globale.

\end{proof}

\begin{prop} \label{prop-libre-d}
On suppose que $\overline \rho_{\mathfrak m}$ est irrÈductible et on choisit une partition
$\underline m=(m_1 \geq \cdots \geq m_r)$ de $d$ maximale de sorte qu'il existe 
\begin{itemize}
\item $I \in \IC$ avec $I_v=\Iw_v(\underline m)$ un sous-groupe parahorique associÈ ‡ $\underline m$ et

\item un idÈal premier $\widetilde{\mathfrak m} \subset \mathfrak m$ tel que
$H^0(X_{I,\bar s_v},\Psi_{I,v,\overline \Qm_l} \otimes V_{\xi,\overline \Qm_l})_{\widetilde{\mathfrak m}}$ est non nul.
\end{itemize} 
Si en outre, tous les $H^i(X_{I,\bar s_v}, \gr^W_r \Psi_{I,v,\overline \Zm_l} \otimes V_{\xi,\overline \Zm_l})_{\mathfrak m}$ 
sont sans torsion, alors la partition $\underline{d_{\mathfrak m,v}}$ associÈ ‡ l'opÈrateur de 
monodromie est Ègale ‡ celle $\underline{d_{\widetilde{\mathfrak m},v}}$.
\end{prop}

\begin{proof}
D'aprËs la proposition prÈcÈdente si $\overline \rho_{\mathfrak m}$ est irrÈductible alors les
$H^i(X_{I,\bar s_v}, \gr^W_r \Psi_{I,v,\overline \Qm_l}\otimes V_{\xi,\overline \Qm_l})_{\mathfrak m}$ sont nuls pour tout $i \neq 0$ et
la suite spectrale (\ref{eq-RZ}) de Rapoport-Zink dÈgÈnËre en $E_1$. Par maximalitÈ de $\underline m$,
les idÈaux premiers $\widetilde{\mathfrak m} \subset \mathfrak m$ tels qu'il existe 
$\Pi \in \Pi_{\widetilde{\mathfrak m}}$ contribuant ‡ 
$H^0(X_{I,\bar s_v},\Psi_{I,v,\overline \Zm_l}\otimes V_{\xi,\overline \Zm_l})_{\mathfrak m} \otimes_{\overline \Zm_l} \overline \Qm_l$
sont tels que, d'aprËs le lemme \ref{lem-secherre} la composante locale $\Pi_v$ est de la forme
$\st_{t_1}(\chi_{v,1}) \times \cdots \times \st_{t_{m_1}}(\chi_{v,m_1})$
o˘ $(t_1\geq \cdots \geq t_{m_1})$ est la partition conjuguÈe ‡ $\underline m$ et o˘ les
$\chi_{v,i}$ sont des caractËres de $F_v^\times$. En particulier tous les
$\rho_{\widetilde{\mathfrak m}}$ fournissent la mÍme partition 
$\underline{d_{\widetilde{\mathfrak m},v}}=(t_1\geq \cdots \geq t_{m_1})$.

On peut aussi bien entendu retrouver la partition $(t_1\geq \cdots \geq t_{m_1})$ ‡ l'aide de
la filtration de monodromie et plus particuliËrement ‡ partir de la dimension de ses groupes de cohomologie .
En effet pour $i \geq 0$ et $\Pi$ une reprÈsentation automorphe irrÈductible $\xi$-cohomologique de composante locale en $v$ de la forme
$\st_{t_1}(\chi_{v,1}) \times \cdots \times \st_{t_{m_1}}(\chi_{v,m_1})$, sa contribution 
$\Bigl [ H^0(X_{I,\bar s_v}, \gr^W_{i} \Psi_{I,v,\overline \Zm_l} \otimes V_{\xi,\overline \Zm_l})_{\mathfrak m}
\otimes_{\overline \Zm_l} \overline \Qm_l \Bigr ] \{ \Pi \}$
‡ la $ \overline{\mathbb Q}_l$-cohomologie de $\gr^W_{i} \Psi_{I,v,\overline \Zm_l} \otimes V_{\xi,\overline \Zm_l}$ est
Ègal ‡ une constante $e_{\mathfrak m,I}(\Pi)$ multipliÈe par le cardinal de l'ensemble suivant
$$\bigl \{ k:~t_k \geq i+1 \hbox{ et } t_k \equiv i+1 \mod 2 \bigr \},$$
o˘ $e_{\mathfrak m,I}(\Pi)$ est, pour une reprÈsentation ayant ses paramËtres de Satake modulo $l$ donnÈs par $ \mathfrak m$, 
essentiellement donnÈe par la dimension de l'espace des invariants $(\Pi^{\oo})^I$ multipliÈe par une constante indÈpendante de $\Pi$,
cf. la dÈfinition 3.3.3 de \cite{boyer-compositio}.

Ainsi comme tous les $\Pi$ tels que $e_{\mathfrak m,I}(\Pi) \neq 0$ ont une composante locale en $v$ de la. forme
$\st_{t_1}(\chi_{v,1}) \times \cdots \times \st_{t_{m_1}}(\chi_{v,m_1})$ pour la mÍme partition $(t_1 \geq \cdots \geq t_{m_1})$, on
en dÈduit qu'il existe une constante $e$ telle que le nombre de lignes
de taille $i$ dans le diagramme de Ferrers de $\underline{d_{\widetilde{\mathfrak m},v}}$
multipliÈ par $e$ est Ègal ‡ 
$$\dim_{\overline \Qm_l} H^0(X_{I,\bar s_v}, \gr^W_{i-1} \Psi_{I,v,\overline \Zm_l} \otimes V_{\xi,\overline \Zm_l})_{\mathfrak m}
\otimes_{\overline \Zm_l} \overline \Qm_l
- \dim_{\overline \Qm_l} H^0(X_{I,\bar s_v}, \gr^W_{i+1} \Psi_{I,v,\overline \Zm_l}\otimes V_{\xi,\overline \Zm_l})_{\mathfrak m}
\otimes_{\overline \Zm_l} \overline \Qm_l.$$

Supposons en outre que tous les $H^i(X_{I,\bar s_v}, \gr^W_r \Psi_{I,v,\overline \Zm_l}\otimes V_{\xi,\overline \Zm_l})_{\mathfrak m}$ sont 
sans torsion. On a alors une filtration de 
$H^0(X_{I,\bar s_v}, \Psi_{I,v,\overline \Fm_l}\otimes V_{\xi,\overline \Fm_l})_{\mathfrak m}$ dont les graduÈs sont les
$H^0(X_{I,\bar s_v}, \gr^W_r \Psi_{I,v,\overline \Zm_l}\otimes V_{\xi,\overline \Zm_l})_{\mathfrak m} \otimes_{\overline \Zm_l}
\overline \Fm_l$ et o˘ l'opÈrateur $\overline N_{\mathfrak m,v}$ induit, d'aprËs (\ref{eq-RZ2}), 
des isomorphismes
$$\overline N_{\mathfrak m,v}^r: H^0(X_{I,\bar s_v}, \gr^W_r \Psi_{I,v,\overline \Zm_l}\otimes V_{\xi,\overline \Zm_l})_{\mathfrak m}
\otimes_{\overline \Zm_l} \overline \Fm_l \simeq H^0(X_{I,\bar s_v}, \gr^W_{-r} 
\Psi_{I,v,\overline \Zm_l}\otimes V_{\xi,\overline \Zm_l})_{\mathfrak m} \otimes_{\overline \Zm_l} \overline \Fm_l.$$
Ainsi la dÈcomposition de Jordan de l'opÈrateur de monodromie agissant sur
$H^0(X_{I,\bar s_v}, \Psi_{I,v,\overline \Fm_l}\otimes V_{\xi,\overline \Fm_l})_{\mathfrak m}$ fournit un diagramme de Ferrers
dont le nombre de lignes de longueur $i$ est Ègal ‡
$$\begin{array}{c}
\dim_{\overline \Fm_l} H^0(X_{I,\bar s_v}, \gr^W_{i-1} \Psi_{I,v,\overline \Zm_l}\otimes V_{\xi,\overline \Zm_l})_{\mathfrak m}
\otimes_{\overline \Zm_l} \overline \Fm_l - \dim_{\overline \Fm_l} 
H^0(X_{I,\bar s_v}, \gr^W_{i+1} \Psi_{I,v,\overline \Zm_l}\otimes V_{\xi,\overline \Zm_l})_{\mathfrak m} 
\otimes_{\overline \Zm_l} \overline \Fm_l 
\\ = \\
\dim_{\overline \Qm_l} H^0(X_{I,\bar s_v}, \gr^W_{i-1} \Psi_{I,v,\overline \Zm_l}\otimes V_{\xi,\overline \Zm_l})_{\mathfrak m}
\otimes_{\overline \Zm_l} \overline \Qm_l - \dim_{\overline \Qm_l} 
H^0(X_{I,\bar s_v}, \gr^W_{i+1} \Psi_{I,v,\overline \Zm_l}\otimes V_{\xi,\overline \Zm_l})_{\mathfrak m} 
\otimes_{\overline \Zm_l} \overline \Qm_l.
\end{array}$$
Comme la reprÈsentation galoisienne $H^0(X_{I,\bar s_v}, \Psi_{I,v,\overline \Fm_l}\otimes V_{\xi,\overline \Fm_l})_{\mathfrak m}$
est isotypique relativement ‡ la reprÈsentation irrÈductible $\overline \rho_{\mathfrak m}$, le diagramme de Ferrers de 
$H^0(X_{I,\bar s_v}, \Psi_{I,v,\overline \Fm_l}\otimes V_{\xi,\overline \Fm_l})_{\mathfrak m}$  est simplement un multiple de celui de 
$d_{\mathfrak m,v}$ et donc finalement $\underline{d_{\mathfrak m,v}}=\underline{d_{\widetilde{\mathfrak m},v}}$.
\end{proof}

\subsection{Construction d'une classe de torsion}
\label{para-preuve}

Soit $\widetilde{\mathfrak m} \subset \mathfrak m$ un idÈal premier de $\Tm_{I}$ tel que,
cf. la remarque suivant la dÈfinition \ref{nota-spl2}, il existe $I \in \IC$ avec
$I_v$ un sous-groupe parahorique associÈ ‡ la partition 
$\underline{d_{\widetilde{\mathfrak m},v}}^*$. \emph{On choisit un tel $\widetilde{\mathfrak m}$
de sorte que $\underline{d_{\widetilde{\mathfrak m},v}}^*$ soit maximal.}
Rappelons que nÈcessairement
$$\underline{d_{\widetilde{\mathfrak m},v}}^* \leq \underline{d_{\mathfrak m,v}}^*,$$
et qu'en cas d'ÈgalitÈ il n'y a plus rien ‡ dÈmontrer. Supposons donc l'inÈgalitÈ prÈcÈdente stricte
de sorte que d'aprËs la proposition \ref{prop-libre-d},
des isomorphismes \ref{eq-RZ2} et de l'interprÈtation de l'opÈrateur de monodromie
$N=\log T \otimes T^\vee$ sur $\overline \Qm_l$, si tous les
$E_{1,\mathfrak m}^{p,q}$ Ètaient libres alors on aurait
$\underline{d_{\widetilde{\mathfrak m},v}}= \underline{d_{\mathfrak m,v}}$ ce qui n'est pas par 
hypothËse. Ainsi donc il existe $r$ et $i$ tel que  
$$H^i(X_{I,\bar s_v}, \gr^W_r \Psi_{I,v,\overline \Zm_l}\otimes V_{\xi,\overline \Zm_l})_{\mathfrak m,\tor} \neq (0).$$
D'aprËs \cite{boyer-invent2} et comme remarquÈ ‡ la fin du \S \ref{para-KHT}, 
les faisceaux pervers d'Harris-Taylor des $\gr^W_r(\Psi_{I,v,\overline \Qm_l})$ en niveau
parahorique, sont les $\PC_I(\chi_v,t)(\frac{\delta}{2})$. Ainsi donc il existe un
caractËre $\chi_v$ de $F_v^\times$ et un entier $t \leq d$ tels que la cohomologie
en niveau $I$ de $\PC_{I,\Gamma}(\chi_v,t)$ a de la torsion. On reprend alors les arguments du \S 2 
de \cite{boyer-mrl} dans un cas plus gÈnÈral i.e. dÈsormais $I_v$ n'est plus le sous-groupe compact maximal
mais un sous-groupe parahorique. ConsidÈrons pour ce faire $t_0$ maximal
tel qu'il existe un caractËre $\chi_v$ et $i_0 \in \Zm$ que l'on choisit minimal, pour lesquels 
$$H^{i_0}(X_{I,\bar s_v},\PC_{I,\Gamma}(\chi_v,t_0)\otimes V_{\xi,\overline \Zm_l})_{\mathfrak m,\tor} \neq (0).$$

\begin{lemm} \label{lem-tor-c}
Pour tout $t_0 < t \leq d$, la torsion des $H^i_c(X_{I,\bar s_v}^{=t},HT_{I,\Gamma}
(\chi_v,\Pi_t)\otimes V_{\xi,\overline \Zm_l})_{\mathfrak m}$
est nulle.
\end{lemm}

\rem Dans l'ÈnoncÈ ci-avant et dans la suite $HT_{I,\Gamma}(\chi_v,\Pi_t) \otimes V_{\xi,\overline \Zm_l}$
dÈsigne une structure entiËre quelconque de $HT_I(\chi_v,\Pi_t) \otimes V_{\xi,\overline \Qm_l}$, Ètant sous
entendu que le rÈsultat ne dÈpend pas du choix d'une telle structure.

\begin{proof}
CommenÁons par noter que comme $X^{=d}_{I,\bar s_v}$ est ponctuel, on a nÈcessairement
$t_0<d$. On raisonne par rÈcurrence sur $t$ du cas trivial $t=d$ ‡ $t_0+1$. Supposons donc
le rÈsultat acquis jusqu'au rang $t+1$ et traitons le cas $t$. On considËre la filtration par les poids 
de $j^{\geq t}_{!} HT_{I,\Gamma}(\chi_v,\Pi_{t})$ dont les graduÈs $gr^W_k(!,\chi_v,t)$ 
%cf. \cite{boyer-invent2} proposition 4.3.1 complÈtÈe par le corollaire 5.4.1, 
sont, d'aprËs (\ref{eq-egalite}),
nuls pour $k>0$ ou $-k<t-d$ et sinon donnÈs par $\lexp p j^{\geq (t-k)}_{!*}
HT_{I,\Gamma}(\chi_v,\Pi_{t}\{\frac{k}{2} \} \times \st_{-k}(\chi_v \{ \frac{t}{2} \})) (-k/2)$:
on rappelle, cf. (\ref{eq-p+p}), que les $p$ et $p+$ extensions intermÈdiaires coÔncident
pour les systËmes locaux d'Harris-Taylor associÈs ‡ un caractËre.
On considËre alors la suite spectrale associÈe, cf. \cite{boyer-compositio} preuve de la proposition
5.1.1:
$$E_1^{i,j}=H^{i+j}(X_{I,\bar s_v},gr_{-i}^W(!,\chi_v,t)\otimes V_{\xi,\overline \Zm_l}) 
\Rightarrow H^{i+j}(X_{I,\bar s_v},j^{\geq t}_! HT_{I,\Gamma}(\chi_v,\Pi_{t})\otimes V_{\xi,\overline \Zm_l}).$$
Le rÈsultat dÈcoule alors trivialement du fait que les $E_{1,\mathfrak m}^{i,j}$ sont
\begin{itemize}
\item sans torsion, d'aprËs la dÈfinition de $t_0$ et 

\item nuls pour $i+j \neq 0$.
\end{itemize}
\end{proof}

\begin{lemm} \label{lem-t0}
Avec les notations prÈcÈdentes, on a $i_0=0$, autrement dit pour tout $i \neq 0,1$, la torsion
de $H^i(X_{I,\bar s_v},\PC_{I,\Gamma}(\chi_v,t_0) \otimes V_{\xi,\overline \Zm_l})_{\mathfrak m}$ est triviale.
\end{lemm}

\begin{proof}
On reprend l'Ètude de la suite spectrale prÈcÈdente pour $t=t_0$:
$$E_1^{i,j}=H^{i+j}(X_{I,\bar s_v},gr_{-i}^W(!,\chi_v,t_0)\otimes V_{\xi,\overline \Zm_l}) 
\Rightarrow H^{i+j}(X_{I,\bar s_v},j^{\geq t_0}_! HT_{I,\Gamma} (\chi_v,\Pi_{t_0}) \otimes V_{\xi,\overline \Zm_l}).$$
Par dÈfinition de $t_0$, pour tout $i \neq 0$, les $E_{1,\mathfrak m}^{i,j}$ sont nuls pour $i+j \neq 0$
et sinon sans torsion. S'il existait $j<0$ tel que la torsion de $E_{1,\mathfrak m}^{0,j}$ Ètait non nul,
alors celle de $E_{\oo,\mathfrak m}^{j}=H^{j}(X_{I,\bar s_v},j^{\geq t_0}_! HT_{I,\Gamma}(\chi_v,\Pi_{t_0})
\otimes V_{\xi,\overline \Zm_l})_{\mathfrak m}$
serait aussi non nulle ce qui n'est pas puisque, $X^{=t_0}_{I,\bar s_v}$ Ètant affine, les
$H^i(X_{I,\bar s_v},j^{\geq t_0}_! HT_{I,\Gamma}(\chi_v,\Pi_{t_0})\otimes V_{\xi,\overline \Zm_l})$ sont nuls pour tout $i<0$. 
On a ainsi $i_0 \geq 0$ et on conclut en utilisant la dualitÈ de Verdier.

\end{proof}

On peut calculer les $H^i(X_{I^vv^\oo,\bar s_v},\lexp p j^{\geq t_0}_{!*} HT_{I^vv^\oo,\Gamma}(\chi_v,
\Pi_{t_0})\otimes V_{\xi,\overline \Zm_l})_{\mathfrak m}$ en utilisant la rÈsolution \ref{eq-resolution0}. On remarque alors, d'aprËs
le lemme \ref{lem-tor-c} et le fait que sur $\overline \Qm_l$ la cohomologie est concentrÈe en degrÈ $0$, que
la torsion de $H^0(X_{I^vv^\oo,\bar s_v},\lexp p j^{\geq t_0}_{!*} HT_{I^vv^\oo,\Gamma}(\chi_v,
\Pi_{t_0})\otimes V_{\xi,\overline \Zm_l})_{\mathfrak m}$ provient d'un morphisme non strict entre les $\overline \Zm_l$-modules
libres
\addtocounter{smfthm}{1}
\begin{multline} \label{eq-fleche}
H^0(X_{I^vv^\oo,\bar s_v},j^{=t_0+1}_{!*} HT_{1_{t_0}}(\chi_v, \Pi_{t_0} \{ -1/2 \} \otimes 
\chi_v \{ t_0/2 \} \otimes \Xi^{1/2})\otimes V_{\xi,\overline \Zm_l})_{\mathfrak m} \longrightarrow  \\
H^0(X_{I^vv^\oo,\bar s_v},j^{=t_0}_{!*} HT_{1_{t_0}}(\chi_v, \Pi_{t_0})\otimes V_{\xi,\overline \Zm_l})_{\mathfrak m}.
\end{multline}
Comme par hypothËse $\overline \rho_{\mathfrak m}$ est irrÈductible, en niveau infini en $v$,
les reprÈsentations automorphes $\Pi$ qui contribuent ‡ la $\overline \Qm_l$-cohomologie des deux 
termes de \ref{eq-fleche}, ont leur composante locale en $v$, d'aprËs le \S 5 de \cite{boyer-compositio},
de la forme 
$$\Pi_v \simeq \st_{t_0+1}(\chi_{v,0}) \times \st_{t_1}(\chi_{v,1}) \times \cdots \st_{t_r}(\chi_{v,r})$$
o˘ les $\chi_{v,k}$ sont des caractËres de $F_v^\times$ avec $\chi_{v,0}$ 
inertiellement Èquivalent ‡ $\chi_v$. D'aprËs loc. cit. la contribution d'une telle reprÈsentation $\Pi$ s'obtient en 
remplaÁant dans l'Ècriture prÈcÈdente de $\Pi_v$, le facteur $\st_{t_0+1}(\chi_{v,0})$ par l'induite
normalisÈe $\st_{t_0}(\chi_{v,0} \{ -1/2 \}) \times \chi_{v,0} \{ t_0/2 \}$.

\begin{lemm} 
Pour tout $t \leq t_0$, les $H^i(X_{I,\bar s_v},\lexp p j^{=t}_{!*} HT_{I,\Gamma}(\chi_v,\Pi_t)\otimes V_{\xi,\overline \Zm_l})_{\mathfrak m}$
vÈrifient les propriÈtÈs suivantes:
\begin{itemize}
\item les quotients libres sont nuls pour tout $i \neq 0$;

\item ils sont nuls pour tout $i < t_0-t$;

\item pour $i=t_0-t$, le sous-module de torsion est non nul.
\end{itemize}
\end{lemm}

\begin{proof}
Le premier point dÈcoule, comme dÈj‡ notÈ, du fait que $\overline \rho_{\mathfrak m}$ est irrÈductible.
Passons provisoirement en niveau infini en $v$ et calculons
les groupes de cohomologie de $\lexp p j^{=t}_{!*} HT_{1_t}(\chi_v,\Pi_t)$ ‡ l'aide de 
la rÈsolution \ref{eq-resolution0}. En ce qui concerne les
$H^i(X_{I^vv^\oo,\bar s_v},\lexp p j^{=t}_{!*} HT_{1_t}(\chi_v,\Pi_t)\otimes V_{\xi,\overline \Zm_l})_{\mathfrak m}$ pour $i \leq t_0-t$, 
du fait que les strates de Newton sont affines et que donc les 
$H^\delta(X_{I^vv^\oo,\bar s_v},j^{=h}_! HT(\chi_v,\Pi_h)\otimes V_{\xi,\overline \Zm_l})$ sont nuls pour $\delta<0$, seuls les
$t_0-t+2$ premiers termes de la rÈsolution interviennent, lesquels se retrouvent aussi, quitte ‡ modifier
les composantes infinitÈsimales, cf. la remarque suivant \ref{defi-HT}, dans la rÈsolution de 
$\lexp p j^{=t_0}_{!*} HT_{1_{t_0}}(\chi_v,\Pi_{t_0})$. En utilisant les propriÈtÈs d'adjonction de $j^{\geq h+1}_!$ et $i^{h+1}_*$,
les flËches 
\begin{multline*}
j^{=h+1}_! HT_{1_t}(\chi_v,\Pi_t \{ \frac{t-h-1}{2} \} \otimes \speh_{h+1-t} (\chi_v \{ \frac{t}{2} \} )) \otimes
\Xi^{\frac{h+1-t}{2}} \longrightarrow \\
j^{=h}_! HT_{1_t}(\chi_v,\Pi_t \{ \frac{t-h}{2} \} \otimes \speh_{h-t} (\chi_v \{ \frac{t}{2} \}  ))\otimes
\Xi^{\frac{h-t}{2}}
\end{multline*}
dans la rÈsolution de $\lexp p j^{=t}_{!*} HT_{1_t}(\chi_v,\Pi_t)$ se dÈduisent de celles 
\begin{multline*}
j^{=h+1}_! HT_{1_{t_0}}(\chi_v,\Pi_{t_0} \{ \frac{t_0-h-1}{2} \} \otimes \speh_{h+1-t_0} 
(\chi_v \{ \frac{t_0}{2} \} ) )\otimes \Xi^{\frac{h+1-t_0}{2}} \longrightarrow \\
j^{=h}_! HT_{1_{t_0}}(\chi_v,\Pi_{t_0} \{ \frac{t_0-h}{2} \} \otimes \speh_{h-t_0} (\chi_v \{ \frac{t_0}{2} \})) 
\otimes \Xi^{\frac{h-t_0}{2}}
\end{multline*}
‡ modification des composantes infinitÈsimales prËs.

\rem Notons, cf. \cite{dat-jl} 3.1.4, que la rÈduction modulo $l$ de $\chi_v[d]_D$ est irrÈductible, de sorte que 
$\LC(\chi_v[t]_D)$ admet un unique rÈseau stable. Il en est de mÍme pour les $\speh_t(\chi_v)$ de sorte que,
pour $\Pi_{t_0}$ bien choisi ne jouant aucun rÙle, il
n'y a pour chacun des faisceaux Ècrits dans les morphismes prÈcÈdents, qu'un unique
rÈseau stable.

D'aprËs le lemme \ref{lem-tor-c}, les groupes de cohomologie des 
$j^{=h}_! HT_{1_h}(\chi_v,\Pi_h))$ pour $h \geq t_0$ sont sans torsion de sorte que la torsion cherchÈe
ne provient que des flËches entre les 
\begin{multline*}
H^0 \Bigl (X_{I^vv^\oo,\bar s_v},  j^{=h+1}_! HT_{1_t}(\chi_v,\Pi_t \{ \frac{t-h-1}{2} \} \otimes 
\speh_{h+1-t} (\chi_v \{ \frac{t}{2} \} \otimes\Xi^{\frac{h+1-t}{2}} \otimes V_{\xi,\overline \Zm_l}\Bigr ) \\ \longrightarrow \\
H^0 \Bigl (X_{I^vv^\oo,\bar s_v}, j^{=h}_! HT_{1_t}(\chi_v,\Pi_t \{ \frac{t-h}{2} \} \otimes 
\speh_{h-t} (\chi_v \{ \frac{t}{2} \} \otimes \Xi^{\frac{h-t}{2}} \otimes V_{\xi,\overline \Zm_l} \Bigr )
\end{multline*}
et plus prÈcisÈment, du fait qu'elles sont ou non strictes. Or comme remarquÈ ci-avant, cette
propriÈtÈ se lit aussi dans la suite spectrale associÈ au calcul des groupes de cohomologie
de $\lexp p j^{=t_0}_{!*} HT_{1_{t_0}}(\chi_v,\Pi_{t_0})$, ce qui donne les propriÈtÈs
de l'ÈnoncÈ en niveau $I^vv^\oo$. Pour redescendre en niveau $I$, on utilise la suite spectrale
\addtocounter{smfthm}{1}
\begin{equation} \label{eq-niveau}
E_2^{i,j}=\ext^i(I_v,H^j(X_{I^vv^\oo,\bar s_v}, P)) \Rightarrow H^{i+j} (X_{I^vI_v,\bar s_v},P)
\end{equation}
o˘ $P$ est un faisceau pervers quelconque: les propriÈtÈs sont alors clairement vÈrifiÈes en niveau $I$.
\end{proof}

\subsection{Diminution du niveau}

On reprend les notations du paragraphe prÈcÈdent o˘ $\underline d$ est la partition associÈe au
sous-groupe parahorique $I_v$. On se propose dans un premier temps de montrer
le rÈsultat suivant.

\begin{prop} \label{prop-torsion-princ}
Sous les hypothËses du thÈorËme \ref{theo-principal}, il existe $i \in \Zm$ ainsi qu'un niveau
$J=I^vJ_v$ avec $J_v$ un sous-groupe parahorique associÈ ‡ une partition $\underline m'$
strictement plus grande que la partition $\underline m$ associÈe ‡ $I_v$, tel que
$H^i(X_{J,\bar \eta_v},V_{\xi,\overline \Zm_l})_{\mathfrak m}$ est non nul et de torsion.
\end{prop}

\begin{proof}
ConsidÈrons une reprÈsentation automorphe $\Pi$ vÈrifiant les points suivants:
\begin{itemize}
\item elle est $\xi$-cohomologique avec pour composante locale en $v$
$$\Pi_v \simeq \st_{t_0+1}(\chi_{v,0}) \times \st_{t_1}(\chi_{v,1}) \times \cdots \st_{t_r}(\chi_{v,r})$$
o˘ les $\chi_{v,k}$ sont des caractËres de $F_v^\times$ avec $\chi_{v,0}$ 
inertiellement Èquivalent ‡ $\chi_v$;

\item en niveau $I$, le morphisme (\ref{eq-fleche}) en les $\Pi^{\oo,v}$-composantes isotypiques  
n'est pas stricte. En particulier la partition associÈe ‡ $(t_0,1,t_1,\cdots,t_r)$ doit Ítre
infÈrieure ou Ègale ‡ $\underline d^*$.
\end{itemize}
On choisit une telle reprÈsentation $\Pi$ de sorte que la partition associÈe ‡
$(t_0+1,t_1,\cdots,t_r)$ soit minimale. D'aprËs la preuve du lemme prÈcÈdent, 
\begin{itemize}
\item en utilisant la suite spectrale (\ref{eq-niveau}) pour $P=\PC_{I,\Gamma}(\chi_v,1) \otimes V_{\xi,\overline \Zm_l}$, on obtient que la 
torsion de $H^{1-t_0}(X_{I^vI'_v},\PC_{I,\Gamma}(\chi_v,1) \otimes V_{\xi,\overline \Zm_l})_{\mathfrak m}$ est non nulle
o˘ $I'_v$ est un sous-groupe parahorique associÈ ‡ la partition $(d'_{\mathfrak m,v})^*$ duale de 
$(\overbrace{1,\cdots,1}^{t_0+1},t_1,\cdots,t_r)$;

\item pour tout $t >1$ et en notant $I':=I^vI'_v$, les groupes de cohomologie
$H^i(X_{I^vI'_v,\bar s_v},\PC_{I',\Gamma}(\chi_v,t)\otimes V_{\xi,\overline \Zm_l})_{\mathfrak m}$ sont sans torsion.
\end{itemize}

A partir de la torsion construite dans  $H^{1-t_0}(X_{I^vI'_v},\PC_{I',\Gamma}(\chi_v,1)\otimes V_{\xi,\overline \Zm_l})_{\mathfrak m}$,
on cherche ‡ construire de la torsion dans un des $H^i(X_{I'},V_\xi)_{\mathfrak m}$.
Pour ce faire il suffit que $I'$ contienne strictement $I$ puisqu'alors tous les
quotients libres de la suite spectrale de Rapoport-Zinkl (\ref{eq-RZ}) sont nuls.
Comme 
$$(\overbrace{1,\cdots,1}^{t_0+1},t_1,\cdots,t_r) \leq (t_0,1,t_1,\cdots,t_r) \leq \underline d^*$$ 
avec ÈgalitÈ dans la premiËre si et seulement si $t_0=1$, on en dÈduit le lemme suivant.

\begin{lemm} \label{lem-torsion2}
Si le diagramme de Ferrers ÈtiquetÈ $T_{\mathfrak m,v}$
ne contient pas deux blocs de taille $1$ d'Ètiquettes $\{ \lambda, q \lambda \}$ ou si
$t_0 >1$ alors $I'_v$ contient strictement $I_v$ i.e. $(d'_{\mathfrak m,v})^*$
est strictement plus grande que $d_{\widetilde{\mathfrak m},v}^*$.
\end{lemm}

On suppose ‡ prÈsent que $t_0=1$ de sorte que tous les 
$H^i(X_{I,\bar s_v},\PC_{I,\Gamma} (t,\chi_v)\otimes V_{\xi,\overline \Zm_l})_{\mathfrak m}$ 
sont libres dËs que $t >1$. 

\begin{lemm} \label{lem-deteriore}
Dans le cas $t_0=1$ et si $I'_v=I_v$, alors $\overline N_{\mathfrak m,v}$
n'est pas dÈtÈriorÈ relativement ‡ $\widetilde{\mathfrak m}$ au sens de la dÈfinition \ref{defi-deter}.
\end{lemm}

\begin{proof}
Notons suivant \cite{boyer-torsion}, $\Fil^1_!(\Psi_{I,v,\overline \Zm_l})$
l'image du morphisme d'adjonction
$$j^{=1}_! j^{=1,*} \Psi_{I,v,\overline \Zm_l} \longrightarrow \Psi_{I,v,\overline \Zm_l}.$$
\begin{itemize}
\item C'est un faisceau pervers qui correspond au noyau de l'opÈrateur de monodromie.

\item Son conoyau $\cofil^1_!(\Psi_{I,v,\overline \Zm_l})$ est libre, et $\PC_{I,\Gamma}(1,\chi_v)$ n'en est
pas un constituant quel que soit le caractËre $\chi_v$.
\end{itemize}
Les groupes de cohomologie $H^i(X_{I,\bar s_v},\cofil^1_!(\Psi_{I,v,\overline \Zm_l})\otimes V_{\xi,\overline \Zm_l})_{\mathfrak m}$
peuvent alors se calculer en utilisant une filtration de stratification quelconque de sorte que
ses graduÈs sont les $\PC_{I,\Gamma}(t,\chi_v)(\frac{t-1}{2}-k)$ pour $1 < t \leq d$ et $0 \leq k < t-1$.
Par hypothËse les termes $E_1^{p,q}$ de cette suite spectrale sont sans torsion et concentrÈs
sur la droite $p+q=0$. Comme dans la preuve de la proposition \ref{prop-libre-d},
par maximalitÈ de $I$ et d'aprËs (\ref{eq-RZ2b}), l'opÈrateur de monodromie sur
$H^0(X_{I,\bar s_v},\cofil^1_!(\Psi_{I,v,\overline \Zm_l})\otimes V_{\xi,\overline \Zm_l})_{\mathfrak m}$ a pour diagramme de Ferrers
un multiple de celui de $\underline{d_{\widetilde{\mathfrak m},v}}^{(1)}$ o˘ 
$\widetilde{\mathfrak m} \subset \mathfrak m$ est un idÈal premier quelconque tel qu'il existe
$\Pi \in \Pi_{\widetilde{\mathfrak m}}$ contribuant ‡ 
$H^0(X_{I,\bar s_v},\Psi_{I,v,\overline \Qm_l}\otimes V_{\xi,\overline \Zm_l})_{\mathfrak m}$. L'absence de torsion
des $H^0(X_{I,\bar s_v},\PC_{I,\Gamma}(t,\chi_v)(\frac{t-1}{2}-k)\otimes V_{\xi,\overline \Zm_l})_{\mathfrak m}$ pour tout $1 < t \leq d$ et
$0 \leq k < t-1$, nous fournit comme dans la preuve de \ref{prop-libre-d}, que le diagramme de Ferrers
associÈ ‡ $\overline N_{\mathfrak m,v}$ sur 
$$H^0(X_{I,\bar s_v},\cofil^1_!(\Psi_{I,v,\overline \Fm_l})\otimes V_{\xi,\overline \Zm_l})_{\mathfrak m} \simeq
H^0(X_{I,\bar s_v},\Psi_{I,v,\overline \Fm_l})\otimes V_{\xi,\overline \Zm_l})_{\mathfrak m}~ /~H^0(X_{I,\bar s_v},
\Fil^1_!(\Psi_{I,v,\overline \Fm_l})\otimes V_{\xi,\overline \Zm_l})_{\mathfrak m}$$
est un multiple de celui de $\underline{d_{\widetilde{\mathfrak m},v}}^{(1)}$.
Ainsi en particulier $\overline N_{\mathfrak m,v}$ n'est pas dÈtÈriorÈ.
\end{proof}
Ainsi sous les hypothËses du thÈorËme \ref{theo-principal}, on a construit une classe
de torsion dans la cohomologie en niveau $J=I^vJ_v$ avec $I_v$ strictement contenu dans $J_v$.
\end{proof}

%
%On invoque alors le corollaire 2.9 de \cite{boyer-mrl} qui pour toute place $w \in \spl(J)$ permet de
%relever toute classe de torsion de $H^i(X_{J,\bar \eta_v},V_{\xi,\overline \Zm_l})_{\mathfrak m}$
%en une reprÈsentation $\xi$-cohomologique $\Pi(w)$ de niveau $I'=J^wI'_w$ avec $I'_w$
%un sous-groupe d'Iwahori associÈ ‡ une partition de la forme $(m \geq 1 \geq 1 \geq \cdots \geq 1)$.

\rem En reprenant la description prÈcÈdente de l'apparition de la torsion, il est aussi possible 
d'augmenter le niveau en une place de ramification de $I$ autre que $v$.

\subsection{Un ÈnoncÈ de non dÈgÈnÈrescence de la monodromie}
\label{para-dege}

On change ‡ prÈsent de point de vue: considÈrant qu'il est ‡ priori difficile d'avoir des informations sur la partition
$\underline{d_{\mathfrak m,v}}$, on cherche des conditions pour que 
$\underline{d_{\widetilde{\mathfrak m},v}}= \underline{d_{ \mathfrak m,v}}$
ou au moins pour que $\underline{d_{\mathfrak m,v}}$ ne soit pas trop \og ÈloignÈe \fg{} de $\underline{d_{\widetilde{\mathfrak m},v}}$.
L'idÈe est de reprendre les arguments prÈcÈdents, i.e. d'Ètudier la torsion dans la cohomologie des variÈtÈs de Shimura.
Pour rÈsumer la preuve prÈcÈdente, sous les hypothËses 1-2-3) du thÈorËme \ref{theo-principal}, on montre 
\begin{itemize}
\item tout d'abord que, 
quitte ‡ diminuer le niveau en $v$, un des groupes de cohomologie d'un faisceau pervers d'Harris-Taylor de la forme $\PC(1,\chi_v)$
a de la torsion 

\item puis en utilisant une des hypothËses 4), on vÈrifie que cette torsion se propage ‡ la cohomologie de la fibre gÈnÈrique
de la variÈtÈ de Shimura.
\end{itemize}
Prenons alors comme point de dÈpart des conditions explicites sur $\mathfrak m$ ou $\widetilde{\mathfrak m}$, pour que, 
quel que soit le niveau en $v$, la localisation en $\mathfrak m$ de la cohomologie de la fibre gÈnÈrique de la variÈtÈ de Shimura 
‡ coefficients dans $V_\xi$ soit sans torsion. D'aprËs le thÈorËme 4.4 de \cite{boyer-imj}, cf. aussi \cite{scholze-cara}
dans un cadre plus gÈnÈral, il suffit qu'il existe une place $w \in \spl(I)$ vÈrifiant la propriÈtÈ suivante
\addtocounter{smfthm}{1}
\begin{equation} \label{eq-torsion0}
\alpha \in S_{\mathfrak{m}}(w) \Rightarrow q_w \alpha \not \in S_{\mathfrak{m}}(w),
\end{equation}
auquel cas la localisation en $\mathfrak m$ de la cohomologie de $V_{\xi,\overline \Zm_l}$ est concentrÈe
en degrÈ mÈdian et sans torsion.

\begin{coro} \label{coro-appli}
Avec les notations  et sous les hypothËses 1-2) de \ref{theo-principal}, on suppose en outre qu'il existe une place $w \in \spl(I)$
telle l'implication de (\ref{eq-torsion0}) soit vÈrifiÈe.
Alors $\overline{N_{\mathfrak m,v}}$ n'est pas dÈtÈriorÈ relativement ‡ $\widetilde{\mathfrak m}$. 
Si en outre, cf. l'hypothËse 4-ii) de \ref{theo-principal},  la composante locale en $v$ de $\Pi_{\widetilde{\mathfrak m}}$ n'est pas de la 
forme $\chi_{v,1} \times \chi_{v,2} \times ?$ pour $\chi_{v,1}$ et $\chi_{v,2}$ des 
caractËres de $F_v^\times$ tels que $\chi_{v,2} \equiv \chi_{v,1} \nu \mod l$ alors
$$\underline{d_{\mathfrak m,v}} = \underline{d_{\widetilde{\mathfrak m},v}}.$$ 
\end{coro}

\rem Pour s'assurer que la localisation en $\mathfrak m$ de la cohomologie n'a pas de torsion, on peut aussi utiliser 
\cite{lan-suh}, et demander que 
\begin{itemize}
\item le paramËtre $\xi$ soit trËs rÈgulier au sens de la dÈfinition 7.18 de loc. cit.,
\item que $l$ soit bon, cf. la dÈfinition 2.3 de loc. cit.
\item et que le niveau $I$ soit maximal en $l$. 
\end{itemize}

\begin{proof}
On reprend les arguments du paragraphe prÈcÈdent et donc l'Ètude de la torsion dans la localisation en
$\mathfrak m$ de la suite spectrale de Rapoport-Zink.
Rappelons que $\overline \rho_{\mathfrak m}$ Ètant supposÈ irrÈductible, sur $\overline \Qm_l$, cette suite spectrale
dÈgÈnËre en $E_1$ de sorte que si aucun de ses termes $E_1^{p,q}$ n'a de la torsion alors 
$\underline{d_{\mathfrak m,v}} = \underline{d_{\widetilde{\mathfrak m},v}}$.
Si au contraire un des termes a de la torsion alors, cf. le lemme \ref{lem-t0},  il existe
$1 \leq t_0 \leq d$ que l'on choisit minimal tel que la torsion de
$H^0(X_{I,\bar s_v},\PC_{I,\Gamma}(\chi_v,t_0) \otimes V_{\xi,\overline \Zm_l})_{\mathfrak m}$ est non nulle.
En reprenant le raisonnement de la preuve de la proposition \ref{prop-torsion-princ}, on en dÈduit que, comme par hypothËse
la localisation en $\mathfrak m$ de la cohomologie de la variÈtÈ de Shimura est sans torsion, que nÈcessairement $t_0=1$.
Pour que cette torsion n'apparaisse pas dans l'aboutissement de la suite spectrale de Rapoport-Zink, il faut donc nÈcessairement,
\begin{itemize}
\item avec les notations du lemme \ref{lem-deteriore}, que $I'_v=I_v$ et donc $\overline N_{\mathfrak m,v}$
n'est pas dÈtÈriorÈ relativement ‡ $\widetilde{\mathfrak m}$,

\item et que, cf. le lemme \ref{lem-torsion2}, que la composante locale en $v$ de $\Pi_{\widetilde{\mathfrak m}}$ n'est pas de la 
forme $\chi_{v,1} \times \chi_{v,2} \times ?$ pour $\chi_{v,1}$ et $\chi_{v,2}$ des 
caractËres de $F_v^\times$ tels que $\chi_{v,2} \equiv \chi_{v,1} \nu \mod l$.
\end{itemize}
Les deux rÈsultats de l'ÈnoncÈ en dÈcoule alors trivialement.
\end{proof}

\bibliographystyle{plain}
\bibliography{bib-ok}

\def\cftil#1{\ifmmode\setbox7\hbox{$\accent"5E#1$}\else
  \setbox7\hbox{\accent"5E#1}\penalty 10000\relax\fi\raise 1\ht7
  \hbox{\lower1.15ex\hbox to 1\wd7{\hss\accent"7E\hss}}\penalty 10000
  \hskip-1\wd7\penalty 10000\box7} \def\cprime{$'$}
\begin{thebibliography}{10}

\bibitem{boyer-invent2}
P.~Boyer.
\newblock Monodromie du faisceau pervers des cycles \'evanescents de quelques
  vari\'et\'es de {S}himura simples.
\newblock {\em Invent. Math.}, 177(2):239--280, 2009.

\bibitem{boyer-compositio}
P.~Boyer.
\newblock Cohomologie des systËmes locaux de {H}arris-{T}aylor et applications.
\newblock {\em Compositio}, 146(2):367--403, 2010.

\bibitem{boyer-duke}
P.~Boyer.
\newblock La cohomologie des espaces de {L}ubin-{T}ate est libre.
\newblock {\em soumis}, 2013.

\bibitem{boyer-torsion}
P.~Boyer.
\newblock Filtrations de stratification de quelques variÈtÈs de {S}himura
  simples.
\newblock {\em Bulletin de la SMF}, 142, fascicule 4:777--814, 2014.

\bibitem{boyer-imj}
P.~Boyer.
\newblock Sur la torsion dans la cohomologie des variÈtÈs de {Sh}imura de
  {K}ottwitz-{H}arris-{T}aylor.
\newblock {\em Journal of the Institute of Mathematics of Jussieu}, pages
  1--19, 2017.

\bibitem{boyer-mrl}
P.~Boyer.
\newblock Torsion classes in the cohomology of {KHT} {S}himura's varieties.
\newblock {\em Mathematical Research Letters}, pages 1--15, 2017.

\bibitem{boyer-FT}
P.~Boyer.
\newblock Groupe mirabolique, stratification de {N}ewton raffin\'ee et
  cohomologie des espaces de {L}ubin-{T}ate.
\newblock {\em Bulletin de la SMF}, \`a paraitre.

\bibitem{scholze-cara}
A.~Caraiani and P.~Scholze.
\newblock On the generic part of the cohomology of compact unitary {S}himura
  varieties.
\newblock {\em Ann. of Math. (2)}, 186(3):649--766, 2017.

\bibitem{dat-jl}
J.-F. Dat.
\newblock Un cas simple de correspondance de {J}acquet-{L}anglands modulo $l$.
\newblock {\em Proc. London Math. Soc. 104}, pages 690--727, 2012.

\bibitem{h-t}
M.~Harris, R.~Taylor.
\newblock {\em The geometry and cohomology of some simple {S}himura varieties},
  volume 151 of {\em Annals of Mathematics Studies}.
\newblock Princeton University Press, Princeton, NJ, 2001.

\bibitem{ill}
L.~Illusie.
\newblock Autour du thÈorËme de monodromie locale.
\newblock In {\em PÈriodes $p$-adiques}, number 223 in AstÈrisque, 1994.

\bibitem{KW}
C.~Khare and J.-P. Wintenberger.
\newblock Serre's modularity conjecture.
\newblock {\em Inventiones Mathematicae}, 178 (3), 2009.

\bibitem{lan-suh}
K.W. Lan and J.~Suh.
\newblock {V}anishing theorems for torsion automorphic sheaves on compact
  {PEL}-type {S}himura varieties.
\newblock {\em Duke Math.}, 161(6):951--1170, 2012.

\bibitem{ribet}
K.~A. Ribet.
\newblock On modular representations of {${\rm Gal}(\overline{\bf Q}/{\bf Q})$}
  arising from modular forms.
\newblock {\em Invent. Math.}, 100(2):431--476, 1990.

\bibitem{y-t}
R.~Taylor and T.~Yoshida.
\newblock Compatibility of local and global {L}anglands correspondences.
\newblock {\em J.A.M.S.}, 20:467--493, 2007.

\bibitem{vigneras-induced}
M.-F. Vign{\'e}ras.
\newblock Induced {$R$}-representations of {$p$}-adic reductive groups.
\newblock {\em Selecta Math. (N.S.)}, 4(4):549--623, 1998.

\end{thebibliography}

\end{document}